\newtheorem{thm}{Theorem}[section]
\newtheorem{lem}[thm]{Lemma}
\newtheorem{exa}[thm]{Example}
\theoremstyle{definition}
\newtheorem{defn}[thm]{Definition}
\theoremstyle{remark}
\newtheorem{rem}[thm]{Remark}
\numberwithin{equation}{section}
\begin{document}
\vbox{\vskip 3.5cm}
\begin{center}
{\bf {\large Monotonicity and asymptotic behavior of solutions for Riemann-Liouville fractional differential equations}}
\end{center}
\title{}
\maketitle

\bigskip

\begin{center}
Tao Zhu\\
\bigskip
School of Mathematics and Physics, Nanjing
Institute of Technology,\\
Nanjing, 211100, P. R. China \\
Email: zhutaoyzu@sina.cn\\
\end{center}

\baselineskip=20pt

\bigskip

\bigskip
\begin{abstract}
In this paper, we first investigate the monotonicity and limit problem of the fractional integral functions. By fixed point theorem and these new results of the fractional integral functions, we present that the Riemann-Liouville fractional differential equations has at least one decreasing solution in $C_{1-\beta}^{+}(0,+\infty)$. The asymptotic behavior of solutions is also discussed under some different conditions. The novelty in this paper is that we investigate the asymptotic behavior of Riemann-Liouville fractional differential equations by the monotonicity of functions. Finally, several
examples are given to illustrate our main results.\\
Keywords: Monotonicity; Asymptotic behavior; Riemann-Liouville fractional derivative;
Fractional differential equations.\\
MSC2020: 26A33; 34A08; 34D05.
\end{abstract}
\bigskip

\section{Introduction}
\bigskip
Fractional differential equations (FDEs) have been of great interest in the past four decades. It is caused both by the intensive development of the theory of fractional calculus itself and by the applications in various sciences and engineerings. Therefore, the theory of fractional differential equations has been developed very quickly. Numerous monographs and research papers have been devoted to the study of fractional differential equations; see the monographs of Samko et al. [18], Podlubny [17], Kilbas et al. [14], Diethelm [8], and the papers of Delbosco and Rodino [7], Lan [16], Choi and Koo [5], Becker et al. [2], Brzdek and Eghbali [3], Webb [21] and Zhu [23].

We note that the investigation of the properties of solutions for fractional differential equations have recently received a lot of attention. For example, Furati and Tatar [9] investigated the asymptotic behavior for solutions of a weighted Cauchy-type nonlinear fractional problem. Chen et al. [4] presented some results for the global attractivity of solutions for fractional differential equations involving Riemann-Liouville derivative. Kassim et al. [12] studied the asymptotic behavior of solutions for a class of nonlinear fractional differential equations involving two Riemann-Liouville fractional derivatives of different orders. Gallegos and Duarte-Mermoud [10] studied the asymptotic behavior of solutions to Riemann-Liouville fractional systems. Zhou [22] studied the attractivity of solutions for fractional evolution equation with almost sectorial operators. Tuan et al. [20] presented some results for existence of global solutions and attractivity for multidimensional fractional differential equations involving Riemann-Liouville derivative. Cong et al. [6] presented some distinct asymptotic properties of solutions to Caputo fractional differential equations. Sousa et al. [19] considered the attractivity of solutions of the fractional differential equation involving the $\psi$-Hilfer fractional derivative. Kassim  and Tatar [13] studied the asymptotic behavior of solutions of fractional differential equations with Hadamard fractional derivatives. Lakshmikantham et al. [15, Lemma 1.7.3] investigated the monotonicity of the solution of Caputo fractional differential equation. In [5], the authors gave an improvement of Lemma 1.7.3 in [15], and proved that the solutions of Caputo fractional differential equation are nondecreasing in $t$ when $f(t,x)=\lambda x$, where $\lambda\geq0$.

Recently, Zhu [24] studied the global attractivity of solutions for the following Riemann-Liouville fractional differential equation
\begin{equation}
\begin{cases}
D_{0^{+}}^{\beta}x(t)=f(t,x(t)),  \qquad t\in(0,+\infty),\\
\lim_{t\rightarrow 0^{+}}t^{1-\beta}x(t)=x_{0}.
\end{cases}
\end{equation}
By weakly singular integral inequalities, Zhu proved that the solutions of the equation (1.1) are globally attractive when
\begin{equation}
|f(t, x)|\leq l(t)|x|^{\mu},
\end{equation}
where $0<\mu\leq1$. Using the Schauder fixed point theorem and generalized Ascoli-Arzela theorem, Zhu [25] investigated the global attractivity of solutions of the equation (1.1) when
\begin{equation}
|f(t, x)|\leq l(t)|x|^{\mu}+k(t),
\end{equation}
where $0<\mu\leq1$. In this paper, our aim is to study the following fractional differential equation
\begin{equation}
\begin{cases}
D_{0^{+}}^{\beta}x(t)=l(t)\phi(x(t))+k(t),  \qquad t\in(0,+\infty),\\
\lim_{t\rightarrow 0^{+}}t^{1-\beta}x(t)=x_{0},
\end{cases}
\end{equation}
where $x_{0}>0$. By fixed point theorem and some new results of the fractional integral functions, we present that the equation (1.4) has at least one decreasing solution in $C_{1-\beta}^{+}(0,+\infty)$, and then we discuss the asymptotic behavior of solution of the equation (1.4) under some different conditions. To the best of our knowledge, there have been few results about the monotonicity of solutions of Riemann-Liouville fractional differential equations. The novelty in this paper is that we investigate the asymptotic behavior of fractional differential equation (1.4) by the monotonicity of functions. We also discuss the asymptotic behavior of solution of the equation (1.1) by the boundedness of solution. As opposed to previous papers, the results presented in this paper are new and they generalize the results in [24, 25].

The outline of this paper is as follows. In Section 2, we introduce some notations, definitions and theorems needed in our proofs. Some new results about the fractional integral functions are also proved in this section. In Section 3, we give some sufficient conditions for the monotonicity and asymptotic behavior of solutions of the equation (1.1) and the equation (1.4). In the last Section, some examples are given to illustrate our main results.

\section{Preliminaries}
In this section, we introduce some notations, definitions and theorems which will be needed later. Some new results about the fractional integral functions are also proved in this section.

For $T>0$, $C^{+}(0,T]$ denotes the set of all nonnegative continuous functions on $(0,T]$.
Let $\alpha\in (0,1)$, we denote
$C^{+}_{\alpha}(0,T]=\{x(t):
x(t)\in C^{+}(0,T]$ and $t^{\alpha}x(t)\in C^{+}[0,T]$$\}$. Let $\|x\|_{\alpha}=\sup_{0<t\leq T}t^{\alpha}x(t)$, then $C^{+}_{\alpha}(0,T]$ endowed with the norm $\|\cdot\|_{\alpha}$ is a Banach space.
We denote $ C_{\alpha}(0,+\infty)=\{x(t): x(t)\in C(0,+\infty)$ and $t^{\alpha}x(t)\in C[0,+\infty)\}$ and $ C^{+}_{\alpha}(0,+\infty)=\{x(t): x(t)\in C^{+}(0,+\infty)$ and $t^{\alpha}x(t)\in C^{+}[0,+\infty)\}$.
$L^{p}[0,T]$ $(p\geq1)$ is the Banach space of all measurable functions with the norm $\|f\|=(\int_{0}^{T}|f(t)|^{p}dt)^{1/p}$. $L^{p}_{Loc}[0,+\infty)$ $(p\geq1)$ is the space of all real valued functions $f$ for which $|f|^{p}$ is Lebesgue integrable over every
bounded subinterval of $[0,+\infty)$.

\begin{defn}\label{defn: 3.1}[18]
Let $\beta\in(0,1)$, the operator $I_{0^{+}}^{\beta}$, defined on $L^{1}[0,T]$ by
$$I_{0^{+}}^{\beta}\rho(t)=\frac{1}{\Gamma(\beta)}\int_{0}^{t}\frac{\rho(s)}{(t-s)^{1-\beta}}ds, $$
is called the Riemann-Liouville fractional integral operator of
order $\beta$.
\end{defn}

\begin{defn}\label{defn: 3.2}[18]
Let $\beta\in(0,1)$, the operator $D_{0^{+}}^{\beta}$, defined by
$$D_{0^{+}}^{\beta}\rho(t)=\frac{d}{dt}I_{0^{+}}^{1-\beta}\rho(t)=\frac{1}{\Gamma(1-\beta)}\frac{d}{dt}\int_{0}^{t}\frac{\rho(s)}{(t-s)^{\beta}}ds,$$
where $I_{0^{+}}^{1-\beta}\rho(t)$ is an absolutely continuous function, is called the Riemann-Liouville fractional differential operator of order $\beta$.
\end{defn}

Now, we will give some lemmas about fractional integral functions which are very useful for the
study of the main results of this paper.
\begin{lem}\label{lem: 2.1}[23]
Let $0<\beta<1$, suppose that $t^{1-\beta}\rho(t)\in L^{p}[0,1]$, where $p>1/\beta$. Then we get
$$\left|\int_{0}^{t}(\frac{t}{t-s})^{1-\beta}\rho(s)ds\right|\leq \frac{2^{1/q}t^{\beta-1/p}}{(q\beta-q+1)^{1/q}}\left(\int_{0}^{t}s^{p(1-\beta)}|\rho(s)|^{p}ds\right)^{1/p}$$
for $t\in[0,1]$, where  $q=\frac{p}{p-1}$.
\end{lem}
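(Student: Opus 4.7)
The plan is to apply H\"older's inequality after a decomposition of the integrand that isolates the factor $s^{1-\beta}\rho(s)$, which lies in $L^p[0,1]$ by hypothesis. First I would rewrite
$$\left(\frac{t}{t-s}\right)^{1-\beta}\rho(s) = t^{1-\beta}(t-s)^{\beta-1}s^{\beta-1}\cdot s^{1-\beta}\rho(s),$$
and apply H\"older's inequality with conjugate exponents $q$ and $p$ to obtain
$$\left|\int_0^t\left(\frac{t}{t-s}\right)^{1-\beta}\rho(s)\,ds\right|\leq t^{1-\beta}\left(\int_0^t [(t-s)s]^{(\beta-1)q}\,ds\right)^{1/q}\left(\int_0^t s^{p(1-\beta)}|\rho(s)|^p\,ds\right)^{1/p}.$$
The hypothesis $p>1/\beta$ is equivalent to $(\beta-1)q+1>0$, which is precisely what is needed for the first integral to be finite at both the singularity at $s=0$ and the singularity at $s=t$.

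Next I would reduce the weighted integral to a Beta-type integral via the substitution $s=tu$:
$$\int_0^t [(t-s)s]^{(\beta-1)q}\,ds = t^{2(\beta-1)q+1}\int_0^1 [u(1-u)]^{(\beta-1)q}\,du.$$
Using the symmetry of the integrand about $u=1/2$, I would split the unit interval there and bound the contribution on each half by an elementary comparison, with the goal of estimating the full integral by $2/[(\beta-1)q+1]$.

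Finally, collecting exponents of $t$ and using $1/q=1-1/p$ gives
$$1-\beta+\frac{2(\beta-1)q+1}{q} = \beta-1+\frac{1}{q} = \beta-\frac{1}{p},$$
which matches the stated power of $t$; the $q$-th root of $2/(q\beta-q+1)$ then produces the claimed constant $2^{1/q}/(q\beta-q+1)^{1/q}$.

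The main obstacle will be pinning down the sharp numerical factor in the Beta estimate: a naive splitting at $u=1/2$ with the crude bound $(1-u)^{(\beta-1)q}\leq 2^{(1-\beta)q}$ on $[0,1/2]$ yields $2^{2(1-\beta)q}$ inside the $q$-th root rather than the desired constant $2$, so either a more delicate comparison that exploits the growth of $(1-u)^{(\beta-1)q}$ near $u=0$ or a direct appeal to the standard inequality $B(\delta,\delta)\leq 2/\delta$ for $\delta\in(0,1]$ will be required to obtain exactly the factor $2^{1/q}$ stated in the lemma.
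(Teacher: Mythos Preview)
The paper does not supply its own proof of this lemma; it is quoted verbatim from reference~[23] (Zhu, \emph{Mediterr.\ J.\ Math.}\ 2021), so there is no in-paper argument to compare against. Your H\"older-plus-Beta strategy is the natural route and the bookkeeping (the decomposition, the substitution $s=tu$, and the exponent count yielding $t^{\beta-1/p}$) is all correct.

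The obstacle you flag is genuine but easy to close. With $\delta=(\beta-1)q+1\in(0,1)$ you need $B(\delta,\delta)\le 2/\delta$. This is equivalent to $\Gamma(\delta+1)^2\le\Gamma(2\delta+1)$, since
\[
\delta\,B(\delta,\delta)=\frac{\Gamma(\delta+1)\Gamma(\delta)}{\Gamma(2\delta)}=\frac{2\,\Gamma(\delta+1)^2}{\Gamma(2\delta+1)}.
\]
The inequality $\Gamma(\delta+1)^2\le\Gamma(2\delta+1)$ follows immediately from the log-convexity of $\Gamma$ applied at the midpoint $\delta+1=\tfrac12\bigl(1+(2\delta+1)\bigr)$, giving $\Gamma(\delta+1)\le\Gamma(1)^{1/2}\Gamma(2\delta+1)^{1/2}$. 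So your ``direct appeal'' option works cleanly and produces exactly the constant $2^{1/q}/(q\beta-q+1)^{1/q}$; the crude half-interval bound you worried about is not needed.
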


\begin{lem}\label{thm:3.1}[25]
Let $0<\beta<1$, suppose that $t^{1-\beta}\rho(t)\in L^{p}[0,1]$, where $p>1/\beta$. Let $$y(t)=\int_{0}^{t}(t-s)^{\beta-1}\rho(s)ds,$$ then $t^{1-\beta}y(t)$ is continuous on $[0,1]$ and $y(t)$ is continuous on $(0,1]$.
\end{lem}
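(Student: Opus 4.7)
The proof should split naturally into two pieces: (a) show that $w(t):=t^{1-\beta}y(t)$ extends continuously to the closed interval $[0,1]$, and (b) deduce from (a) that $y(t)$ itself is continuous on $(0,1]$. Part (b) is a one-line consequence of (a), since on $(0,1]$ we may write $y(t)=t^{\beta-1}w(t)$, a product of two continuous functions. So essentially all of the work lives in (a).

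For (a), the first step is to rewrite $w$ in a form that isolates the integrable singularity. The substitution $s=tu$ gives
\begin{equation*}
w(t)=t^{1-\beta}\int_{0}^{t}(t-s)^{\beta-1}\rho(s)\,ds=t^{\beta}\int_{0}^{1}u^{\beta-1}(1-u)^{\beta-1}h(tu)\,du,
\end{equation*}
where $h(s):=s^{1-\beta}\rho(s)\in L^{p}[0,1]$. The hypothesis $p>1/\beta$ is equivalent to $q<1/(1-\beta)$ for the Hölder exponent $q=p/(p-1)$, which is precisely what makes both $u^{\beta-1}(1-u)^{\beta-1}\in L^{q}[0,1]$ and $\beta-1/p>0$. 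The second step is an approximation argument. Choose $h_{n}\in C[0,1]$ with $h_{n}\to h$ in $L^{p}[0,1]$ by density, and set $\rho_{n}(s):=s^{\beta-1}h_{n}(s)$, $y_{n}(t):=\int_{0}^{t}(t-s)^{\beta-1}\rho_{n}(s)\,ds$, $w_{n}(t):=t^{1-\beta}y_{n}(t)$. The same substitution gives $w_{n}(t)=t^{\beta}\int_{0}^{1}u^{\beta-1}(1-u)^{\beta-1}h_{n}(tu)\,du$; since $h_{n}$ is continuous and bounded, the integrand is continuous in $t$ for each $u$ and dominated by $\|h_{n}\|_{\infty}u^{\beta-1}(1-u)^{\beta-1}\in L^{1}[0,1]$, so dominated convergence makes $w_{n}$ continuous on $[0,1]$ with $w_{n}(0)=0$. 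Applying \lemref{lem: 2.1} to $\rho-\rho_{n}$ and using $t^{\beta-1/p}\le 1$ for $t\in[0,1]$,
\begin{equation*}
|w(t)-w_{n}(t)|\le \frac{2^{1/q}}{(q\beta-q+1)^{1/q}}\,\|h-h_{n}\|_{L^{p}[0,1]},\qquad t\in[0,1].
\end{equation*}
Thus $w_{n}\to w$ uniformly on $[0,1]$, and $w$ is continuous on $[0,1]$ as a uniform limit of continuous functions.

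\textbf{Main obstacle.} The delicate point is continuity of $w$ at interior points of $(0,1]$: continuity at $t=0$ essentially falls out of \lemref{lem: 2.1} alone, because $t^{\beta-1/p}\to 0$, but at a fixed interior $t_{0}$ the singular kernel $(t-s)^{\beta-1}$ blocks any direct dominated-convergence argument unless one cuts the integral into a neighborhood of $s=t_{0}$ and its complement. The approximation argument above sidesteps that bookkeeping by trading it for $L^{p}$-density of $C[0,1]$ and the uniform estimate furnished by \lemref{lem: 2.1}. The condition $p>1/\beta$ is used in two places that are really the same exponent condition: to place the kernel in $L^{q}$ so the substituted integral converges, and to keep the $t$-dependent factor $t^{\beta-1/p}$ bounded on $[0,1]$ so the Lemma~\ref{lem: 2.1} estimate gives uniform (not merely pointwise) convergence.
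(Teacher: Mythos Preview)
The paper does not actually prove this lemma; it is quoted from reference [25] and stated without proof. So there is no in-paper argument to compare against.

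That said, your argument is correct. The substitution $s=tu$ yielding
\[
w(t)=t^{\beta}\int_{0}^{1}u^{\beta-1}(1-u)^{\beta-1}h(tu)\,du,\qquad h(s)=s^{1-\beta}\rho(s),
\]
is accurate, and the two uses you make of $p>1/\beta$ (kernel in $L^{q}$, and $t^{\beta-1/p}\le 1$ on $[0,1]$) are exactly right. The approximation step is clean: for $h_{n}\in C[0,1]$ the integrand is dominated by $\|h_{n}\|_{\infty}u^{\beta-1}(1-u)^{\beta-1}\in L^{1}$, so $w_{n}\in C[0,1]$ by dominated convergence; and applying \lemref{lem: 2.1} to $\rho-\rho_{n}$ gives the uniform estimate
\[
\sup_{t\in[0,1]}|w(t)-w_{n}(t)|\le \frac{2^{1/q}}{(q\beta-q+1)^{1/q}}\,\|h-h_{n}\|_{L^{p}[0,1]}\to 0,
\]
so $w\in C[0,1]$ as a uniform limit. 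The deduction $y(t)=t^{\beta-1}w(t)\in C(0,1]$ is immediate. One small point worth making explicit: $w(0)=0$, which follows either from $w_{n}(0)=0$ together with uniform convergence or directly from \lemref{lem: 2.1} via $|w(t)|\le C\,t^{\beta-1/p}\|h\|_{L^{p}}\to 0$.
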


\begin{rem}
In [7], if $\rho(t)\in C_{\alpha}[0,+\infty)$ with $0\leq\alpha<\beta<1$, then $y(t)$ is continuous on $[0,+\infty)$ and $y(0)=0$.
Webb [21] proved that $y(t)$ is continuous on $[0,1]$ when $\rho(t)\in C_{\alpha}[0,1]$, where $0\leq\alpha\leq\beta<1$. Agarwal et al. [1] proved that $y(t)$ is continuous on $[0,1]$ when $\rho(t)\in L^{p}[0,1]$, where $p>1/\beta$. Becker et al. [2] proved that $y(t)$ is continuous on $(0,1]$ when $\rho(t)\in C(0,1]\bigcap L^{1}[0,1]$.
\end{rem}

\begin{lem}\label{thm:3.1}
Let $0<\beta<1$, suppose that $\rho\in C(0,+\infty)\bigcap L^{1}_{Loc}[0,+\infty)$.\\
(1). If $t^{\beta}\rho(t)$ is a nonincreasing function on $(0,+\infty)$, then $$y(t)=\int_{0}^{t}(t-s)^{\beta-1}\rho(s)ds$$
 is a nonincreasing function on $(0,+\infty)$.\\
(2). If $t^{\beta}\rho(t)$ is a nondecreasing function on $(0,+\infty)$, then $y(t)$
 is a nondecreasing function on $(0,+\infty)$.
\end{lem}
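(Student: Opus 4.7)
The plan is to reduce the problem to a monotonicity statement about the integrand of a single integral with a fixed positive kernel. The natural device is the change of variables $s = tu$ in the defining integral for $y(t)$. Setting $g(s) := s^{\beta}\rho(s)$, a direct substitution and simplification gives
\begin{equation*}
y(t) = \int_{0}^{t}(t-s)^{\beta-1}\rho(s)\,ds
     = \int_{0}^{1}(1-u)^{\beta-1}u^{-\beta}\,g(tu)\,du,
\end{equation*}
where the factor $t^{\beta}$ coming from $(t-s)^{\beta-1}t\,du$ is exactly cancelled by the $t^{-\beta}$ produced when rewriting $\rho(tu) = (tu)^{-\beta}g(tu)$. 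This is the key algebraic step and it packages the hypothesis into the integrand in the cleanest possible way: the kernel $(1-u)^{\beta-1}u^{-\beta}$ is nonnegative and independent of $t$, and the only $t$-dependence now sits inside $g(tu)$.

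Once this form is established, the monotonicity of $y$ follows immediately from the monotonicity of $g$. For part (1), fix $0<t_{1}<t_{2}$; for every $u\in(0,1)$ we have $t_{1}u<t_{2}u$, so the assumption that $g$ is nonincreasing on $(0,+\infty)$ gives $g(t_{1}u)\ge g(t_{2}u)$. Multiplying by the nonnegative kernel and integrating preserves the inequality, yielding $y(t_{1})\ge y(t_{2})$. Part (2) is identical with both inequalities reversed. No appeal to differentiation of $y$ (which would be delicate since $\rho$ is only locally $L^{1}$) is required.

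The main obstacle I anticipate is the technical point of justifying the change of variables and the integrability of the transformed integrand. Near $u=1$ the factor $(1-u)^{\beta-1}$ is integrable because $\beta>0$, and near $u=0$ the factor $u^{-\beta}$ is integrable because $\beta<1$; the behavior of $g(tu)$ on the compact range $[0,t]$ is controlled by the fact that $\rho \in L^{1}_{Loc}[0,+\infty)$, which ensures that $y(t)$ is finite for each $t>0$ (so the substitution is legitimate as a Lebesgue integral identity). I would spell this out briefly to make clear that both integrals are well defined, and then the monotonicity argument is a one-line comparison under the integral sign.
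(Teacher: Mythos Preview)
Your proposal is correct and is essentially the same argument as the paper's: both perform the substitution $s=tu$ to rewrite $y(t)=\int_{0}^{1}(1-u)^{\beta-1}u^{-\beta}(tu)^{\beta}\rho(tu)\,du$ and then read off the monotonicity of $y$ directly from that of $t^{\beta}\rho(t)$ against the fixed nonnegative kernel. The paper simply cites the continuity of $y$ (via Remark~2.5) rather than discussing integrability in detail, but the core idea and computation are identical.
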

\begin{proof}
(1). From Remark 2.5, we know that $y(t)$ is a continuous function on $(0,+\infty)$. We have
$$y(t)=t^{\beta-1}\int_{0}^{t}(1-s/t)^{\beta-1}\rho(s)ds.$$
Let $v=s/t$, then we get
\begin{equation}
y(t)=t^{\beta}\int_{0}^{1}(1-v)^{\beta-1}\rho(tv)dv=\int_{0}^{1}(1-v)^{\beta-1}v^{-\beta}(tv)^{\beta}\rho(tv)dv.
\end{equation}
Since $t^{\beta}\rho(t)$ is a nonincreasing function on $(0,+\infty)$, from (2.1), then we get that $y(t)$ is a nonincreasing function on $(0,+\infty)$.\\
(2). Since $t^{\beta}\rho(t)$ is a nondecreasing function on $(0,+\infty)$, also from (2.1), we get that $y(t)$ is a nondecreasing function on $(0,+\infty)$.
\end{proof}

\begin{lem}\label{thm:3.1}
Let $0<\beta<1$, and suppose that $\rho\in C(0,+\infty)\bigcap L^{1}_{Loc}[0,+\infty)$.
Then $$\lim_{t\rightarrow+\infty}y(t)=\lim_{t\rightarrow+\infty}\int_{0}^{t}(t-s)^{\beta-1}\rho(s)ds=0$$
when $\lim_{t\rightarrow+\infty}t^{\beta}\rho(t)=0$.
\end{lem}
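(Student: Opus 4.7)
The plan is to use the substitution already established in Lemma 2.6, namely the identity
$$
y(t)=\int_{0}^{1}(1-v)^{\beta-1}v^{-\beta}\,g(tv)\,dv,\qquad g(s):=s^{\beta}\rho(s),
$$
which is valid for every $t>0$. By hypothesis $g(s)\to 0$ as $s\to\infty$. The natural idea is then dominated convergence against the integrable weight $(1-v)^{\beta-1}v^{-\beta}$, whose total integral is $B(1-\beta,\beta)=\Gamma(\beta)\Gamma(1-\beta)$. The subtlety is that $g$ need not be bounded near $v=0$, since $\rho$ is only assumed continuous on $(0,+\infty)$ and locally $L^1$, so we cannot cite DCT directly. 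Instead I would split the integral at $v=M/t$ and handle the two pieces separately.

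Given $\varepsilon>0$, choose $M>0$ so that $|g(s)|<\varepsilon$ for all $s\ge M$, and take $t>2M$. For the tail piece, since $|g(tv)|<\varepsilon$ whenever $v\ge M/t$,
$$
\left|\int_{M/t}^{1}(1-v)^{\beta-1}v^{-\beta}g(tv)\,dv\right|
\le \varepsilon\int_{0}^{1}(1-v)^{\beta-1}v^{-\beta}dv
= \varepsilon\,\Gamma(\beta)\Gamma(1-\beta).
$$
For the head piece I would reverse the substitution to return to the original kernel: with $s=tv$,
$$
\int_{0}^{M/t}(1-v)^{\beta-1}v^{-\beta}g(tv)\,dv
= \int_{0}^{M}(t-s)^{\beta-1}\rho(s)\,ds.
$$
Since $\beta-1<0$ and $s\le M\le t/2$ we have $(t-s)^{\beta-1}\le (t/2)^{\beta-1}$, so this integral is dominated by $(t/2)^{\beta-1}\int_{0}^{M}|\rho(s)|\,ds$, and the local integrability of $\rho$ makes $\int_{0}^{M}|\rho(s)|\,ds<\infty$, so the whole head goes to $0$ as $t\to\infty$.

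Combining the two estimates yields
$$
\limsup_{t\to\infty}|y(t)|\le \varepsilon\,\Gamma(\beta)\Gamma(1-\beta),
$$
and since $\varepsilon$ was arbitrary we conclude $\lim_{t\to\infty}y(t)=0$. The only genuinely delicate point is the possible singularity of $\rho$ at $0$; the reason the argument goes through is that the head piece, after reverting variables, is controlled by the \emph{$L^{1}_{\mathrm{loc}}$} hypothesis together with the negative power $(t/2)^{\beta-1}$, which is exactly the decay gained from $\beta<1$. Everything else is just a splitting argument with the Beta-function bound doing the heavy lifting.
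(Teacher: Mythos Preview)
Your argument is correct, and it is genuinely different from the paper's proof. The paper splits at the fixed point $s=1$: the piece $\int_{0}^{1}(t-s)^{\beta-1}\rho(s)\,ds$ is handled exactly as your head piece (via $(t-1)^{\beta-1}\|\rho\|_{L^{1}[0,1]}$), but for $\int_{1}^{t}(t-s)^{\beta-1}\rho(s)\,ds$ the paper invokes a weakly singular H\"older estimate from \cite{AF} (Lemma~4.2 of [23]) with a carefully chosen exponent $p$ satisfying $2\beta-1<1/p<\beta$, and then applies L'H\^opital's rule, with a further case split on whether $\int_{1}^{\infty}s^{p(1-\beta)}|\rho(s)|^{p}ds$ is finite. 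Your approach instead exploits the Beta-kernel representation (2.1) from Lemma~2.6 directly, splits at the \emph{moving} point $v=M/t$, and gets the tail bound for free from $|g|<\varepsilon$ on $[M,\infty)$ against the fixed integrable weight $(1-v)^{\beta-1}v^{-\beta}$. This is shorter and entirely self-contained: it needs no external inequality, no choice of $p$, no L'H\^opital, and no case distinction. The paper's route, while heavier, reuses estimates that recur later in Section~3 (e.g.\ in the proof of Theorem~3.6), so its machinery is not wasted; but for this lemma in isolation your argument is the more economical one.
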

\begin{proof}
We know that $y(t)$ is continuous on $(0,+\infty)$.
For $t>1$, we get
$$y(t)=\int_{0}^{1}(t-s)^{\beta-1}\rho(s)ds+\int_{1}^{t}(t-s)^{\beta-1}\rho(s)ds.$$
We know
$$\left|\int_{0}^{1}(t-s)^{\beta-1}\rho(s)ds\right|\leq(t-1)^{\beta-1}\int_{0}^{1}|\rho(s)|ds,$$
then we get
\begin{equation}
\lim_{t\rightarrow+\infty}\int_{0}^{1}(t-s)^{\beta-1}\rho(s)ds=0.
\end{equation}
Using the inequality (4.4) in [23, Lemma 4.2], let $p>1$ and $2\beta-1<1/p<\beta$, then we have
\begin{equation}
\begin{split}
\left|\int_{1}^{t}(t-s)^{\beta-1}\rho(s)ds\right|&\leq t^{\beta-1}\int_{1}^{t}(\frac{t}{t-s})^{1-\beta}|\rho(s)|ds\\
&\leq\frac{2^{1/q}t^{\beta-1}(t-1)^{\beta-1/p}}{(q\beta-q+1)^{1/q}}\left(\int_{1}^{t}s^{p(1-\beta)}|\rho(s)|^{p}ds\right)^{1/p}\\
&\leq\frac{2^{1/q}t^{2\beta-1-1/p}}{(q\beta-q+1)^{1/q}}\left(\int_{1}^{t}s^{p(1-\beta)}|\rho(s)|^{p}ds\right)^{1/p},\\
\end{split}
\end{equation}
where $q=\frac{p}{p-1}$. In (2.3), if $\int_{1}^{+\infty}s^{p(1-\beta)}|\rho(s)|^{p}ds=+\infty$, then using L'H\^{o}spital's rule, we obtain
\begin{equation}
\begin{split}
\lim_{t\rightarrow+\infty}\frac{\int_{1}^{t}s^{p(1-\beta)}|\rho(s)|^{p}ds}{t^{p+1-2p\beta}}
&=\lim_{t\rightarrow+\infty}\frac{t^{p(1-\beta)}|\rho(t)|^{p}}{(p+1-2p\beta)t^{p-2p\beta}}\\
&=\lim_{t\rightarrow+\infty}\frac{t^{p\beta}|\rho(t)|^{p}}{(p+1-2p\beta)}\\
&=0.\\
\end{split}
\end{equation}
Using (2.2), (2.3) and (2.4), we obtain
$$\lim_{t\rightarrow+\infty}\int_{0}^{t}(t-s)^{\beta-1}\rho(s)ds=0.$$
If $\int_{1}^{+\infty}s^{p(1-\beta)}|\rho(s)|^{p}ds$ is finite, since $2\beta-1-1/p<0$, using (2.2) and (2.3), we can immediately obtain
$$\lim_{t\rightarrow+\infty}\int_{1}^{t}(t-s)^{\beta-1}\rho(s)ds=0$$ and $\lim_{t\rightarrow+\infty}y(t)=0$.
\end{proof}

\begin{lem}\label{thm:3.1}
Let $0<\beta<1$, and suppose that $\rho\in C(0,+\infty)\bigcap L^{1}_{Loc}[0,+\infty)$.
Then $$\lim_{t\rightarrow+\infty}y(t)=\lim_{t\rightarrow+\infty}\int_{0}^{t}(t-s)^{\beta-1}\rho(s)ds=\frac{a\pi}{\sin(\beta\pi)}$$
when $\lim_{t\rightarrow+\infty}t^{\beta}\rho(t)=a$.
\end{lem}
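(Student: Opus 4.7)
The strategy is to reduce to Lemma 2.7 by subtracting off the asymptotic profile $a\,t^{-\beta}$. Define
$$\rho_{1}(t):=\rho(t)-a\,t^{-\beta}, \qquad t>0.$$
Since $0<\beta<1$, the function $t\mapsto t^{-\beta}$ is continuous on $(0,+\infty)$ and belongs to $L^{1}_{Loc}[0,+\infty)$, so $\rho_{1}\in C(0,+\infty)\cap L^{1}_{Loc}[0,+\infty)$. Moreover
$$\lim_{t\rightarrow+\infty}t^{\beta}\rho_{1}(t)=\lim_{t\rightarrow+\infty}\bigl(t^{\beta}\rho(t)-a\bigr)=0,$$
so Lemma 2.7 applies to $\rho_{1}$ and yields
$$\lim_{t\rightarrow+\infty}\int_{0}^{t}(t-s)^{\beta-1}\rho_{1}(s)\,ds=0.$$

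\textbf{Exact evaluation of the correction term.} It now suffices to compute $\int_{0}^{t}(t-s)^{\beta-1}s^{-\beta}\,ds$ explicitly. The substitution $s=tu$ gives
$$\int_{0}^{t}(t-s)^{\beta-1}s^{-\beta}\,ds=\int_{0}^{1}(1-u)^{\beta-1}u^{-\beta}\,du=\Gamma(\beta)\Gamma(1-\beta)=\frac{\pi}{\sin(\beta\pi)},$$
by the Beta--Gamma identity and Euler's reflection formula; notice that the value is independent of $t$. Adding $a$ times this identity to the displayed limit above gives
$$\lim_{t\rightarrow+\infty}y(t)=\lim_{t\rightarrow+\infty}\int_{0}^{t}(t-s)^{\beta-1}\rho_{1}(s)\,ds+\frac{a\pi}{\sin(\beta\pi)}=\frac{a\pi}{\sin(\beta\pi)},$$
as desired.

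\textbf{Main obstacle.} There is essentially no new analytic difficulty: the hard work (Hölder-type bounds and the L'H\^{o}pital argument) has already been carried out in Lemma 2.7. The only things to check are that $a\,t^{-\beta}$ meets the hypotheses of Lemma 2.7 (immediate from $\beta<1$, which ensures local integrability at the origin) and the closed-form Beta integral, which is standard. A self-contained alternative would be to start from representation (2.1), namely $y(t)=\int_{0}^{1}(1-v)^{\beta-1}v^{-\beta}(tv)^{\beta}\rho(tv)\,dv$, split the $v$-interval as $(0,T_{0}/t]\cup(T_{0}/t,1]$ for a large $T_{0}$, bound the tail integrand by $M(1-v)^{\beta-1}v^{-\beta}$, and invoke dominated convergence; but the reduction to Lemma 2.7 is cleaner and re-uses existing machinery.
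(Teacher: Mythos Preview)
Your proof is correct and follows exactly the same approach as the paper: subtract the asymptotic profile $a\,s^{-\beta}$, apply Lemma 2.7 to the remainder, and evaluate the Beta integral $\int_{0}^{t}(t-s)^{\beta-1}s^{-\beta}\,ds=\Gamma(\beta)\Gamma(1-\beta)=\pi/\sin(\beta\pi)$. If anything, you are slightly more careful than the paper in checking that $a\,t^{-\beta}\in L^{1}_{Loc}[0,+\infty)$ so that Lemma 2.7 genuinely applies to $\rho_{1}$.
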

\begin{proof}
We have
$$y(t)=\int_{0}^{t}(t-s)^{\beta-1}[\rho(s)-as^{-\beta}]ds+a\int_{0}^{t}(t-s)^{\beta-1}s^{-\beta}ds.$$
Since $\lim_{t\rightarrow+\infty}t^{\beta}[\rho(t)-at^{-\beta}]=0$, then using Lemma 2.7, we obtain
$$\lim_{t\rightarrow+\infty}\int_{0}^{t}(t-s)^{\beta-1}[\rho(s)-as^{-\beta}]ds=0.$$
Then
\begin{equation}
\begin{split}
\lim_{t\rightarrow+\infty}\int_{0}^{t}(t-s)^{\beta-1}\rho(s)ds
&=\lim_{t\rightarrow+\infty}a\int_{0}^{t}(t-s)^{\beta-1}s^{-\beta}ds\\
&=\frac{a\pi}{\sin(\beta\pi)}.\\
\end{split}
\end{equation}
\end{proof}

\begin{rem}
In [21], Webb proved that the fractional integral function
$$y(t)=\int_{0}^{t}(t-s)^{\beta-1}\rho(s)ds, \qquad \beta>0 $$
is a nondecreasing function on $[0,T]$ when $\rho\in L^{1}[0,T]$ is a nonnegative and nondecreasing function. In Lemma 2.6, let $\beta=1/2$ and  $\rho(t)=t^{-1/3}$, we know that $\rho(t)=t^{-1/3}$ is a decreasing function and $t^{1/2}\rho(t)=t^{1/6}$ is a increasing function on $(0,+\infty)$. Using Lemma 2.6, then we get that $y(t)$ is a increasing function on $(0,+\infty)$. In fact, we know that
$$y(t)=\int_{0}^{t}(t-s)^{-1/2}\rho(s)ds=\int_{0}^{t}(t-s)^{-1/2}s^{-1/3}ds=\frac{\Gamma(1/2)\Gamma(2/3)}{\Gamma(7/6)}t^{\frac{1}{6}}$$ is a increasing function on $(0,+\infty)$. Therefore, our results improve the result obtained by Webb [21].
\end{rem}

\begin{rem}
In Lemma 2.6, if $t^{\beta}\rho(t)$ is a nondecreasing function on $(0,T]$, using the same method as in the proof of Lemma 2.6, we can get that $y(t)$ is a nondecreasing function on $(0,T]$. For example, let
$$y(t)=\int_{0}^{t}\frac{(t-s)^{-1/2}}{1+s}ds,$$
we know that the function $t^{1/2}\rho(t)=\frac{t^{1/2}}{1+t}$ is increasing on $[0,1]$ and decreasing on $[1,+\infty)$, then we obtain that $y(t)$ is a increasing function on $[0,1]$. In fact, let $u=\sqrt{t-s}$, then we get
\begin{equation}
\begin{split}
y(t)&=\int_{0}^{\sqrt{t}}\frac{2}{1+t-u^{2}}du\\
&=\frac{2}{\sqrt{1+t}}\ln(\sqrt{1+t}+\sqrt{t}).\\
\end{split}
\end{equation}
From (2.6), we have
$$\frac{dy}{dt}=\frac{1-\sqrt{\frac{t}{1+t}}\ln(\sqrt{1+t}+\sqrt{t})}{(1+t)\sqrt{t}}.$$
We know that
$$g(t)=\sqrt{\frac{t}{1+t}}\ln(\sqrt{1+t}+\sqrt{t})$$
is a increasing function on $[0,+\infty)$, then
$g(t)<g(1)=\frac{\sqrt{2}}{2}\ln(\sqrt{2}+1)<1$ for $t\in[0,1)$. Therefore, we get $\frac{dy}{dt}>0$ when $t\in(0,1]$, then we get that $y(t)$ is a increasing function on $[0,1]$. In fact, $y(t)$ is a increasing function on $[0,T_{0}]$ first, then decreases on $[T_{0},+\infty)$, where $T_{0}>1$ satisfies the equality $g(T_{0})=1$.

Since $\lim_{t\rightarrow+\infty}\frac{\sqrt{t}}{1+t}=0$, using Lemma 2.7, then we get $\lim_{t\rightarrow+\infty}y(t)=0$. In fact, from (2.6) and using L'H\^{o}spital's rule, we get
$$\lim_{t\rightarrow+\infty}y(t)=\lim_{t\rightarrow+\infty}\frac{2}{\sqrt{t}}=0.$$
\end{rem}

\begin{rem}
Let
$$y(t)=\int_{0}^{t}\frac{(t-s)^{-1/2}}{1+\sqrt{s}}ds,$$
we know $\lim_{t\rightarrow+\infty}t^{\beta}\rho(t)=\lim_{t\rightarrow+\infty}\frac{\sqrt{t}}{1+\sqrt{t}}=1$, then using Lemma 2.8, we get
$$\lim_{t\rightarrow+\infty}y(t)=\lim_{t\rightarrow+\infty}\int_{0}^{t}\frac{(t-s)^{-1/2}}{1+\sqrt{s}}ds=\pi.$$
In fact, we know
$$\frac{1}{\sqrt{t}}-\frac{1}{1+\sqrt{t}}=\frac{1}{\sqrt{t}(1+\sqrt{t})}\leq\frac{1}{\sqrt[3]{t^{2}}}$$
for all $t\in(0,+\infty)$. Then
\begin{equation}
\int_{0}^{t}(t-s)^{-1/2}(s^{-1/2}-s^{-2/3})ds\leq\int_{0}^{t}\frac{(t-s)^{-1/2}}{1+\sqrt{s}}ds\leq\int_{0}^{t}(t-s)^{-1/2}s^{-1/2}ds
\end{equation}
and
\begin{equation}
\pi-\frac{\Gamma(1/2)\Gamma(1/3)}{\Gamma(5/6)}t^{-1/6}\leq\int_{0}^{t}\frac{(t-s)^{-1/2}}{1+\sqrt{s}}ds\leq\pi.
\end{equation}
From (2.8), we get
$$\lim_{t\rightarrow+\infty}\int_{0}^{t}\frac{(t-s)^{-1/2}}{1+\sqrt{s}}ds=\pi.$$
\end{rem}

In [2], Becker et al. obtained the equivalence between the fractional differential equation and the Volterra integral equation.
\begin{thm}\label{thm:3.1}
Let $f(t,x)$ be a function that is continuous on the set
$$\mathbf{B}=\left\{(t,x)\in \mathbb{R}^{2}:0<t\leq T, x\in I \right\},$$ where $I\subseteq \mathbb{R}$ denotes an unbounded interval. Suppose a function $x: (0,T]\rightarrow I$ is continuous and that both $x(t)$ and $f(t,x(t))$ are absolutely integrable on $(0,T]$. Then $x(t)$ satisfies the fractional differential equation (1.1) on $(0,T]$ if and only if it satisfies the following Volterra integral equation
$$x(t)=x_{0}t^{\beta-1}+\frac{1}{\Gamma(\beta)}\int_{0}^{t}(t-s)^{\beta-1}f(s,x(s))ds$$
on $(0,T]$.
\end{thm}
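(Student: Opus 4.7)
The plan is to prove the two implications separately, using as the main bridges the semigroup property $I_{0^{+}}^{\alpha} I_{0^{+}}^{\gamma} \rho = I_{0^{+}}^{\alpha+\gamma} \rho$ for $L^{1}$ functions and the Beta-function computation $I_{0^{+}}^{1-\beta}(s^{\beta-1})(t) = \Gamma(\beta)$, the latter following at once from $\int_{0}^{1}(1-v)^{-\beta} v^{\beta-1}\,dv = \Gamma(\beta)\Gamma(1-\beta)$ after the substitution $s=tv$.

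For the forward direction I rewrite $D_{0^{+}}^{\beta} x(t) = f(t,x(t))$ as $\frac{d}{dt} I_{0^{+}}^{1-\beta} x(t) = f(t,x(t))$. Since $f(\cdot,x(\cdot))$ is absolutely integrable on $(0,T]$, the function $I_{0^{+}}^{1-\beta} x$ is absolutely continuous on every $[\varepsilon,T]$, and integrating from $\varepsilon$ to $t$ yields
$$I_{0^{+}}^{1-\beta} x(t) = I_{0^{+}}^{1-\beta} x(\varepsilon) + \int_{\varepsilon}^{t} f(s,x(s))\,ds.$$
The crucial step is $\lim_{\varepsilon \to 0^{+}} I_{0^{+}}^{1-\beta} x(\varepsilon) = x_{0}\Gamma(\beta)$: writing $x(s) = s^{\beta-1}(x_{0} + \eta(s))$ with $\eta(s) \to 0$ as $s \to 0^{+}$, the contribution of $s^{\beta-1} x_{0}$ evaluates to $x_{0}\Gamma(\beta)$ by the Beta identity, while the $\eta$-remainder is killed by splitting $(0,\varepsilon)$ into a small initial subinterval where $\eta$ is uniformly small and a complementary subinterval where $(\varepsilon-s)^{-\beta}$ is bounded. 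Passing $\varepsilon \to 0^{+}$ gives $I_{0^{+}}^{1-\beta} x(t) = x_{0}\Gamma(\beta) + \int_{0}^{t} f(s,x(s))\,ds$. I then apply $\frac{d}{dt} I_{0^{+}}^{\beta}$ to both sides; using $I_{0^{+}}^{\beta} I_{0^{+}}^{1-\beta} x(t) = \int_{0}^{t} x(u)\,du$, $\frac{d}{dt} I_{0^{+}}^{\beta}(x_{0}\Gamma(\beta)) = x_{0} t^{\beta-1}$, and $\frac{d}{dt} I_{0^{+}}^{\beta+1} f = I_{0^{+}}^{\beta} f$, I recover the Volterra equation.

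For the reverse direction I apply $I_{0^{+}}^{1-\beta}$ to both sides of the Volterra equation. The $x_{0} t^{\beta-1}$ term yields the constant $x_{0}\Gamma(\beta)$ by the Beta identity, while the convolution term collapses to $\int_{0}^{t} f(s,x(s))\,ds$ by the semigroup identity $I_{0^{+}}^{1-\beta} I_{0^{+}}^{\beta} f = I_{0^{+}}^{1} f$, valid since $f(\cdot,x(\cdot)) \in L^{1}(0,T)$. The resulting right-hand side is absolutely continuous with derivative $f(t,x(t))$, which is continuous on $(0,T]$, so $D_{0^{+}}^{\beta} x(t) = f(t,x(t))$ pointwise. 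For the initial condition, I multiply the Volterra equation by $t^{1-\beta}$ and bound the remainder by splitting $[0,t]$ into $[0,t/2]$ and $[t/2,t]$. On the first piece $(t-s)^{\beta-1} \leq (t/2)^{\beta-1}$, so after multiplying by $t^{1-\beta}$ the term is dominated by $2^{1-\beta}\int_{0}^{t/2}|f(s,x(s))|\,ds \to 0$ by absolute continuity of the integral; on the second piece I use continuity of $f(\cdot,x(\cdot))$ away from $0$ together with the explicit primitive $\int_{t/2}^{t}(t-s)^{\beta-1}\,ds = (t/2)^{\beta}/\beta$ to conclude that this contribution is $O(t) \cdot \sup_{[t/2,t]}|f(\cdot,x(\cdot))|$ and vanishes in the limit.

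The main obstacle is the boundary analysis at $t=0$, which appears symmetrically in both directions: identifying $\lim_{\varepsilon \to 0^{+}} I_{0^{+}}^{1-\beta} x(\varepsilon)$ in the forward direction and proving the Volterra integral term is $o(t^{\beta-1})$ in the reverse direction. Both rely on a delicate interplay between the initial condition, the Beta-function identity, and careful singular-kernel splitting near the origin, because mere $L^{1}$ integrability of $f(\cdot,x(\cdot))$ is only just enough to tame the kernel.
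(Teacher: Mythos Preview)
The paper does not supply its own proof of this statement; it is quoted verbatim from Becker--Burton--Purnaras~[2]. Your forward implication and the differential-equation half of the reverse implication are correct (in the forward direction your ``splitting'' of $(0,\varepsilon)$ is in fact unnecessary: once $\varepsilon$ is below the threshold for $|\eta|<\delta$, the bound $|\int_{0}^{\varepsilon}(\varepsilon-s)^{-\beta}s^{\beta-1}\eta(s)\,ds|\le\delta\,\Gamma(\beta)\Gamma(1-\beta)$ holds directly).

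The genuine gap is in the final step, where you try to recover $\lim_{t\to 0^{+}}t^{1-\beta}x(t)=x_{0}$ from the Volterra equation. Your estimate on $[t/2,t]$ reads
\[
t^{1-\beta}\int_{t/2}^{t}(t-s)^{\beta-1}|f(s,x(s))|\,ds\ \le\ \frac{2^{-\beta}}{\beta}\,t\cdot\sup_{s\in[t/2,t]}|f(s,x(s))|,
\]
and you assert this vanishes. But the only hypotheses are $f(\cdot,x(\cdot))\in L^{1}(0,T)\cap C(0,T]$, which do not prevent $\sup_{[t/2,t]}|f|$ from growing like $1/t$ or faster. Concretely, take $f(t,x)=h(t)$ with $h$ a sum of disjoint continuous bumps supported just to the left of $a_{n}=2^{-n}$, of heights $H_{n}\asymp 2^{n}n^{2\beta/(1-\beta)}$ and widths $w_{n}\asymp 2^{-n}n^{-2/(1-\beta)}$. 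Then $\sum H_{n}w_{n}\asymp\sum n^{-2}<\infty$, so $h\in L^{1}(0,T)\cap C(0,T]$, yet along $t=a_{n}$ one has $t^{1-\beta}\int_{0}^{t}(t-s)^{\beta-1}h(s)\,ds\ge a_{n}^{1-\beta}H_{n}w_{n}^{\beta}/\beta\asymp 1$. Hence under the stated assumptions the condition $t^{1-\beta}x(t)\to x_{0}$ need not follow from the Volterra equation. What your own identity $I_{0^{+}}^{1-\beta}x(t)=x_{0}\Gamma(\beta)+\int_{0}^{t}f(s,x(s))\,ds$ \emph{does} give is the weaker initial datum $I_{0^{+}}^{1-\beta}x(0^{+})=x_{0}\Gamma(\beta)$, and that is the form in which the equivalence is sharp; the pointwise limit $t^{1-\beta}x(t)\to x_{0}$ requires an extra hypothesis such as $t^{1-\beta}f(\cdot,x(\cdot))\in L^{p}$ with $p>1/\beta$ (cf.\ Lemma~2.3--2.4 of this paper).
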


Using fixed point theorem and weakly singular integral inequalities, Zhu [23, Corollary 4.5] obtained the following result of the fractional differential equation (1.1).
\begin{thm}
Let $0<\mu\leq1$ and $p>1/\beta$. Suppose  $f: (0,+\infty)\times\mathbb{R}\rightarrow\mathbb{R}$ is a continuous
function, and there exist nonnegative functions $l(t)$ and $k(t)$ such that
$$|f(t, x)|\leq l(t)|x|^{\mu}+k(t)$$
for all $(t,x)\in (0,+\infty)\times\mathbb{R}$, where $t^{(1-\mu)(1-\beta)}l(t)\in
C(0,+\infty)\bigcap L^{p}_{Loc}[0,+\infty)$ and $t^{1-\beta}k(t)\in
C(0,+\infty)\bigcap L^{p}_{Loc}[0,+\infty)$. Then the fractional differential equation
(1.1) has at least one solution in $C_{1-\beta}(0,+\infty)$.
\end{thm}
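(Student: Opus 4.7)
The plan is to apply Schauder's fixed point theorem to the integral-equation form of (1.1) on each finite interval $(0,T]$ and then pass to $(0,+\infty)$ by a diagonal extraction. By Theorem 2.12, seeking $x\in C_{1-\beta}(0,T]$ that solves (1.1) is equivalent to finding a fixed point of
$$(Fx)(t) = x_{0} t^{\beta-1} + \frac{1}{\Gamma(\beta)}\int_{0}^{t}(t-s)^{\beta-1}f(s,x(s))\,ds$$
in that space. I will look for such a fixed point in the closed ball $B_{R}=\{x\in C_{1-\beta}(0,T]:\|x\|_{1-\beta}\leq R\}$ of the Banach space $C_{1-\beta}(0,T]$.

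To establish $F(B_{R})\subseteq B_{R}$, note that for $x\in B_{R}$ one has $|x(s)|\leq s^{\beta-1}R$, so the growth hypothesis gives $|f(s,x(s))|\leq l(s)s^{\mu(\beta-1)}R^{\mu}+k(s)$. Applying Lemma 2.3 with $\rho(s)=f(s,x(s))$, expanding $\int_{0}^{t}s^{p(1-\beta)}|\rho(s)|^{p}\,ds$ via the elementary inequality $(a+b)^{p}\leq 2^{p-1}(a^{p}+b^{p})$, and invoking the hypotheses $t^{(1-\mu)(1-\beta)}l(t),\,t^{1-\beta}k(t)\in L^{p}_{Loc}[0,+\infty)$, one arrives at
$$\|Fx\|_{1-\beta}\leq |x_{0}|+C(T)\bigl(R^{\mu}+1\bigr)$$
with a finite constant $C(T)$. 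When $\mu<1$ the sublinearity of $R^{\mu}$ permits one to choose $R=R(T)$ with $F(B_{R})\subseteq B_{R}$; the case $\mu=1$ is the decisive obstacle, and I would handle it either by first shrinking $T$ or, more systematically, by replacing the sup-norm by a Bielecki-type exponentially weighted norm that absorbs the linear term.

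I would then verify the remaining hypotheses of Schauder's theorem on $B_{R}$. Continuity of $F$ follows from the continuity of $f$ and dominated convergence, using the envelope built in the self-map estimate. For relative compactness of $F(B_{R})$, the map $x\mapsto t^{1-\beta}x(t)$ is an isometric isomorphism from $C_{1-\beta}(0,T]$ onto $C[0,T]$, so it suffices to prove uniform boundedness and equicontinuity on $[0,T]$ of the family $\{t^{1-\beta}(Fx)(t):x\in B_{R}\}$. Continuity up to $t=0$ is supplied by Lemma 2.4, and equicontinuity on $(0,T]$ follows from $L^{p}$-H\"older estimates on differences of Riemann-Liouville integrals, again through Lemma 2.3. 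Schauder's theorem then delivers a fixed point $x_{T}$, hence a solution of (1.1) on $(0,T]$.

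Finally, to upgrade to a solution on $(0,+\infty)$ lying in $C_{1-\beta}(0,+\infty)$, I would take $T=n\in\mathbb{N}$, pick a local solution $x_{n}$ for each $n$, and extract a diagonal subsequence converging uniformly on every compact subset of $(0,+\infty)$ by the same Arzel\`a-Ascoli argument. Passing to the limit in the integral equation yields the desired global solution. The technically hardest point is the $\mu=1$ closing of the self-map estimate noted above; once that is absorbed into a weighted norm, the rest of the argument is a routine Schauder-plus-diagonal scheme.
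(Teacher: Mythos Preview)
This theorem is not proved in the present paper; it is quoted as Corollary~4.5 of [23], obtained there ``using fixed point theorem and weakly singular integral inequalities.'' Your Schauder scheme built on the Lemma~2.3 estimate is exactly the framework the paper itself deploys in its proof of the closely related Theorem~3.1 (there restricted to $0\le\mu<1$): for $\mu<1$ your self-map, compactness, and continuity arguments match that proof essentially line for line, and your diagonal extraction is a more honest version of the paper's ``since $T$ can be chosen arbitrarily larger'' step.

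The substantive divergence is at $\mu=1$. In [23] this case is closed by a weakly singular Gronwall-type inequality, which yields an a~priori bound on $t^{1-\beta}|x(t)|$ independent of the interval length and thereby both fixes the radius of the Schauder ball and makes globalization immediate. Of your two alternatives, ``first shrinking $T$'' is by itself a gap: it produces only a local solution, and continuation without uniqueness requires precisely the a~priori bound that the Gronwall inequality supplies. The Bielecki option is sound in principle, but the weakly singular kernel $(t-s)^{\beta-1}$ means the textbook one-line exponential-weight estimate does not apply directly; you must argue more carefully, for instance via H\"older (using $p>1/\beta$ so that $\int_0^t (t-s)^{q(\beta-1)}e^{-q\lambda(t-s)}\,ds=O(\lambda^{-(q\beta-q+1)})\to 0$) together with a separate treatment of the singularity near $s=0$. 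Once that is written out, the rest of your argument goes through; what it buys over the Gronwall route is that it avoids importing an external integral inequality, at the cost of a more delicate norm computation.
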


Finally, the following Schauder fixed point theorem will be needed in the proof of the main results of
this paper.
\begin{thm}\label{thm:3.1}[11]
If $U$ is a nonempty, closed, convex and bounded subset of a Banach space $E$, and $F:U\rightarrow U$ is completely continuous. Then $F$ has a fixed point in $U$.
\end{thm}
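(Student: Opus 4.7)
The plan is to prove the Schauder fixed point theorem by reducing to the Brouwer fixed point theorem via a finite-dimensional approximation argument (the classical Schauder projection method). The theorem is folklore, but the outline below is the route I would take.

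First I would record the key compactness consequence of the hypothesis. Since $F:U\to U$ is completely continuous and $U$ is bounded, the set $K=\overline{F(U)}$ is a compact subset of $U$ (using that $U$ is closed and convex, hence contains $\overline{F(U)}$). This compact set is where all the work happens. For each integer $n\geq 1$, compactness gives a finite $1/n$-net $\{y_1^{(n)},\ldots,y_{m_n}^{(n)}\}\subset K$, and I would form the convex hull $C_n=\mathrm{conv}\{y_1^{(n)},\ldots,y_{m_n}^{(n)}\}$, a compact convex subset of the finite-dimensional subspace spanned by the net.

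Next I would introduce the Schauder projection $P_n:K\to C_n$ defined via a continuous partition of unity: set $\mu_i^{(n)}(x)=\max\{0,1/n-\|x-y_i^{(n)}\|\}$ and
\begin{equation*}
P_n(x)=\frac{\sum_{i=1}^{m_n}\mu_i^{(n)}(x)\,y_i^{(n)}}{\sum_{i=1}^{m_n}\mu_i^{(n)}(x)}.
\end{equation*}
The denominator is positive on $K$ because the $y_i^{(n)}$ form a $1/n$-net, so $P_n$ is continuous on $K$, takes values in $C_n\subset U$, and satisfies the uniform estimate $\|P_n(x)-x\|<1/n$ for every $x\in K$. Then the composition $F_n:=P_n\circ F\big|_{C_n}:C_n\to C_n$ is a continuous self-map of a compact convex subset of a finite-dimensional normed space. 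Applying the Brouwer fixed point theorem (which I would take as given; this is the real crux of the whole story) yields a fixed point $x_n\in C_n$ with $F_n(x_n)=x_n$, i.e. $P_n(F(x_n))=x_n$.

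Finally I would extract the limit. Because $F(x_n)\in K$ and $K$ is compact, a subsequence $F(x_{n_k})$ converges to some $x^\ast\in K\subset U$. The uniform approximation bound gives
\begin{equation*}
\|x_{n_k}-F(x_{n_k})\|=\|P_{n_k}(F(x_{n_k}))-F(x_{n_k})\|<1/n_k\longrightarrow 0,
\end{equation*}
so $x_{n_k}\to x^\ast$ as well. Continuity of $F$ then yields $F(x^\ast)=\lim_k F(x_{n_k})=x^\ast$, which is the desired fixed point.

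The main obstacle is the finite-dimensional case, namely the Brouwer fixed point theorem itself, whose proof is genuinely nontrivial (topological degree, simplicial approximation, or a homology/retraction argument). Once Brouwer is in hand, the infinite-dimensional extension via the Schauder projection is essentially a bookkeeping exercise in compactness and uniform approximation, and the two technical points worth checking carefully are (i) that $P_n$ indeed lands in $U$ (this uses convexity of $U$ together with $y_i^{(n)}\in K\subset U$) and (ii) that the denominator in $P_n$ never vanishes on $K$ (this uses the $1/n$-net property).
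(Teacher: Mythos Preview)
Your argument is a correct and standard proof of the Schauder fixed point theorem via Schauder projections and reduction to Brouwer's theorem. However, the paper does not prove this statement at all: it is quoted as a well-known result from reference~[11] (Hale, \emph{Theory of Functional Differential Equations}) and used as a tool in Section~3. So there is no proof in the paper to compare yours against; your proposal supplies a full argument where the paper simply invokes the literature.
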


\section{Monotonicity and asymptotic behavior for FDEs}
In this section, we present the monotonicity and asymptotic behavior of solutions for the fractional differential equations. First, we will investigate the equation (1.4) under the following hypotheses:
\begin{enumerate}
\item $\phi(t)$ is a nonnegative, continuous and nondecreasing function on $[0,+\infty)$, and there exists a nonnegative constant $M$ such that $|\phi(t)|\leq M t^{\mu}$ for all $t\in[0,+\infty)$, where $0\leq\mu<1$.
\item $t^{\beta}l(t)$ is a nonnegative, continuous and nonincreasing function on $(0,+\infty)$, and $t^{(1-\mu)(1-\beta)}l(t)\in L^{p}[0,1]$, where $p>1/\beta$.
\item $t^{\beta}k(t)$ is a nonnegative, continuous and nonincreasing function on $(0,+\infty)$, and $t^{1-\beta}k(t)\in L^{p}[0,1]$, where $p>1/\beta$.
\end{enumerate}

\begin{thm}\label{thm:3.1}
Assume that the hypotheses (1)-(3) hold, then the Riemann-Liouville
fractional differential equation (1.4) has at least one decreasing solution in $C^{+}_{1-\beta}(0,+\infty)$.
\end{thm}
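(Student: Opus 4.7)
The plan is to recast (1.4) as a fixed-point equation via Theorem 2.11 and apply Schauder's theorem (Theorem 2.13) to the operator
\[
(Fx)(t) := x_0 t^{\beta-1} + \frac{1}{\Gamma(\beta)} \int_0^t (t-s)^{\beta-1}\bigl[l(s)\phi(x(s))+k(s)\bigr]\,ds
\]
restricted to a closed, convex, bounded set $U \subset C^{+}_{1-\beta}(0,+\infty)$ consisting of nonnegative \emph{nonincreasing} functions with $\|x\|_{1-\beta}\le R$ for a suitably chosen $R>0$. The functional-analytic setup and the complete-continuity verification should mirror the proof of Theorem 2.12 (i.e. Zhu [23]); the new content is the invariance of the cone of decreasing functions.

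The heart of the argument is that $F$ preserves monotonicity, and this is exactly what Lemma 2.6 is designed for. Given $x\in U$ nonincreasing, put $\rho(s)=l(s)\phi(x(s))+k(s)$. The term $x_0 t^{\beta-1}$ is strictly decreasing, so it suffices to show the integral piece is nonincreasing. Write
\[
s^{\beta}\rho(s)=\bigl[s^{\beta}l(s)\bigr]\phi(x(s))+s^{\beta}k(s).
\]
By hypothesis~(2), $s^{\beta}l(s)$ is nonnegative and nonincreasing; since $\phi$ is nondecreasing and $x$ nonincreasing, $\phi(x(s))$ is nonnegative and nonincreasing; hence their product is nonincreasing. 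By hypothesis~(3), $s^{\beta}k(s)$ is also nonincreasing. Therefore $s^{\beta}\rho(s)$ is nonincreasing on $(0,+\infty)$. The continuity of $\rho$ on $(0,+\infty)$ is immediate, and local $L^{1}$-integrability near the origin follows from Hölder's inequality in the range $p>1/\beta$ applied to the weighted $L^{p}$-conditions in (2)--(3). Lemma 2.6(1) then gives that $\int_{0}^{t}(t-s)^{\beta-1}\rho(s)\,ds$ is nonincreasing, so $Fx$ is decreasing. Nonnegativity of $Fx$ is immediate from the sign assumptions.

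To bound $\|Fx\|_{1-\beta}$ I would use $\phi(x(s))\le M(Rs^{\beta-1})^{\mu}=MR^{\mu}s^{\mu(\beta-1)}$, which yields
\[
t^{1-\beta}\rho(t)\le MR^{\mu}\,t^{(1-\mu)(1-\beta)}l(t)+t^{1-\beta}k(t),
\]
an element of $L^{p}[0,1]$ by (2)--(3). Lemma 2.1 then gives an estimate of the form $\|Fx\|_{1-\beta}\le x_{0}+C_{\beta,p}(MR^{\mu}A+B)$ on $(0,1]$, and since $\mu<1$, choosing $R$ large enough ensures $F(U)\subset U$ there. For $t>1$, the pointwise bounds $l(t)\le l(1)t^{-\beta}$ and $k(t)\le k(1)t^{-\beta}$, which follow from the nonincreasing property of $t^{\beta}l(t)$ and $t^{\beta}k(t)$, together with the already-established nonincreasing property of the integral piece, propagate the bound to all of $(0,+\infty)$. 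Complete continuity of $F$ on $U$ then follows from dominated convergence and the Arzela--Ascoli-type arguments used in [23,25]. Schauder's theorem (Theorem 2.13) produces a fixed point $x\in U$, which is the desired decreasing solution.

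The main obstacle I foresee is precisely the uniform control of $t^{1-\beta}Fx(t)$ for large $t$: the hypotheses give $L^{p}$-integrability only on $[0,1]$ while $t^{1-\beta}$ blows up at infinity, so the norm bound cannot come from Lemma 2.1 alone. It is here that the nonincreasing hypotheses on $t^{\beta}l(t)$ and $t^{\beta}k(t)$ -- not otherwise essential to the asymptotic-decay part of the paper -- play the decisive structural role, both by producing the decay $l,k=O(t^{-\beta})$ at infinity and by feeding into Lemma 2.6 to prevent the integral term from growing.
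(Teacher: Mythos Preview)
Your core strategy---the operator $F$, the use of Lemma~2.6 to show that $F$ preserves the cone of decreasing functions, the weighted $L^{p}$ estimate to control $t^{1-\beta}(Fx)(t)$, and Schauder's theorem---is exactly what the paper does. The divergence is in the domain: the paper fixes an arbitrary $T>0$, works in the Banach space $C^{+}_{1-\beta}(0,T]$, obtains a decreasing fixed point there, and only at the very end invokes ``$T$ arbitrary'' to pass to $(0,+\infty)$. You instead attempt to apply Schauder directly on $(0,+\infty)$ with $U=\{x:\sup_{t>0}t^{1-\beta}x(t)\le R,\ x\text{ nonincreasing}\}$.

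That does not work, and the obstruction is precisely the one you flag at the end but do not actually resolve. Take any $x\in U$. Since $t^{\beta}k(t)$ is nonnegative and nonincreasing it has a limit $b\ge 0$, and by Lemma~2.8 the $k$-part of the integral satisfies $\int_{0}^{t}(t-s)^{\beta-1}k(s)\,ds\to b\pi/\sin(\beta\pi)$. Whenever $b>0$ (which hypothesis~(3) permits), the integral piece of $(Fx)(t)$ therefore tends to a strictly positive constant, so $t^{1-\beta}(Fx)(t)\to+\infty$ and $Fx\notin U$. Neither of your proposed fixes helps: the decay $l,k=O(t^{-\beta})$ only yields $I(t)=O(1)$, and the nonincreasing property of $I(t)$ only gives $I(t)\le I(1)$; both are overwhelmed by the growing weight $t^{1-\beta}$. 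Thus $F$ does not map $U$ into itself on the unbounded interval, and the Schauder argument cannot even be launched there. The paper's restriction to $(0,T]$ sidesteps this entirely: the weighted norm is then a supremum over a bounded set, and the $T$-dependent constants $M_{1},M_{2}$ in the inequality $x_{0}+M_{1}R^{\mu}+M_{2}\le R$ are finite, so the sublinearity $\mu<1$ suffices to close the invariance.
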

\begin{proof}We first show that the fractional differential equation (1.4) has at least one decreasing solution in $C^{+}_{1-\beta}(0,T]$, where $T$ is an arbitrarily  constant. Let us define the operator $F: C^{+}_{1-\beta}(0,T]\rightarrow C^{+}_{1-\beta}(0,T]$ by the formula
\begin{equation}
(Fx)(t)=x_{0}t^{\beta-1}+\frac{1}{\Gamma(\beta)}\int_{0}^{t}(t-s)^{\beta-1}\left[l(s)\phi(x(s))+k(s)\right]ds.
\end{equation}
Since $0\leq\mu<1$, let $R>0$ be a sufficiently large constant such that
\begin{equation}
x_{0}+M_{1}R^{\mu}+M_{2}\leq R,
\end{equation}
where $M_{1}=\frac{2MT^{\beta-1/p}\left(\int_{0}^{T}s^{p(1-\mu)(1-\beta)}l^{p}(s)ds\right)^{1/p}}{\Gamma(\beta)(q\beta-q+1)^{1/q}}$ and $M_{2}=\frac{2T^{\beta-1/p}\left(\int_{0}^{T}s^{p(1-\beta)}k^{p}(s)ds\right)^{1/p}}{\Gamma(\beta)(q\beta-q+1)^{1/q}}$ ($p>1/\beta$ and $q=\frac{p}{p-1}$). We define the subset $U$ of $C^{+}_{1-\beta}(0,T]$ as follows
\begin{equation}
U=\left\{x(t): x(t)\in C^{+}_{1-\beta}(0,T]\mbox{ is a decreasing function and }\|x\|_{1-\beta}\leq R\right\}.
\end{equation}
The set $U$ is nonempty, convex, closed and bounded in $C^{+}_{1-\beta}(0,T]$.

Step 1. We first prove $Fx\in U$ when $x\in U$. Since $x_{0}>0$ and $l(t)$, $k(t)$ and $\phi(t)$ are nonnegative functions, we get that $Fx$ is a nonnegative function on $(0,T]$. We have
\begin{equation}
\begin{split}
t^{1-\beta}\left[l(t)\phi(x(t))+k(t)\right]&\leq t^{1-\beta}\left[Ml(t)x^{\mu}(t)+k(t)\right]\\
&\leq Mt^{(1-\mu)(1-\beta)}l(t)(t^{1-\beta}x(t))^{\mu}+t^{1-\beta}k(t)\\
&\leq MR^{\mu}t^{(1-\mu)(1-\beta)}l(t)+t^{1-\beta}k(t),
\end{split}
\end{equation}
using our assumptions and Lemma 2.4, we get $(Fx)(t)\in C^{+}_{1-\beta}(0,T]$. Since $x(t)$ is a decreasing function on $(0,T]$, then
$t^{\beta}l(t)\phi(x(t))+t^{\beta}k(t)$ is a nonincreasing function on $(0,T]$. By Lemma 2.6 and $x_{0}t^{\beta-1}$ is a decreasing function on $(0,T]$, we get that $(Fx)(t)$ is a decreasing function on $(0,T]$. Let $x\in U$, using Lemma 2.3, then we have
\begin{equation}
\begin{split}
t^{1-\beta}(Fx)(t)
&= x_{0}+\frac{t^{1-\beta}}{\Gamma(\beta)}\int_{0}^{t}(t-s)^{\beta-1}\left[l(s)\phi(x(s))+k(s)\right]ds\\
&\leq x_{0}+\frac{t^{1-\beta}}{\Gamma(\beta)}\int_{0}^{t}(t-s)^{\beta-1}\left[Ml(s)x^{\mu}(s)+k(s)\right]ds\\
&\leq x_{0}+\frac{1}{\Gamma(\beta)}\int_{0}^{t}(\frac{t}{t-s})^{1-\beta}\left[MR^{\mu}s^{\mu(\beta-1)}l(s)+k(s)\right]ds\\
&\leq x_{0}+\frac{2^{1/q}MR^{\mu}t^{\beta-1/p}}{\Gamma(\beta)(q\beta-q+1)^{1/q}}\left(\int_{0}^{t}s^{p(1-\mu)(1-\beta)}l^{p}(s)ds\right)^{1/p}\\
&\quad+\frac{2^{1/q}t^{\beta-1/p}}{\Gamma(\beta)(q\beta-q+1)^{1/q}}\left(\int_{0}^{t}s^{p(1-\beta)}k^{p}(s)ds\right)^{1/p}\\
&\leq x_{0}+M_{1}R^{\mu}+M_{2}\\
&\leq R.\\
\end{split}
\end{equation}
Thus, we have $\|Fx\|_{1-\beta}\leq R$.

Step 2. Now, we prove that the operator $F$ is a compact operator. We only need to prove that $t^{1-\beta}F(U)$ is uniformly bounded and equicontinuous on $[0,T]$. Let $x\in U$, from (3.5), we get $\|Fx\|_{1-\beta}\leq R$. This proves that the set $t^{1-\beta}F(U)$ is bounded. Let $0\leq t_{1}<t_{2}\leq T$, we get
\begin{equation}
\begin{split}
&|t_{2}^{1-\beta}(Fx)(t_{2})-t_{1}^{1-\beta}(Fx)(t_{1})|\\
&\leq\frac{1}{\Gamma(\beta)}\left|\int_{0}^{t_{2}}(\frac{t_{2}}{t_{2}-s})^{1-\beta}l(s)\phi(x(s))ds-\int_{0}^{t_{1}}(\frac{t_{1}}{t_{1}-s})^{1-\beta}l(s)\phi(x(s))ds\right|\\
&\quad+\frac{1}{\Gamma(\beta)}\left|\int_{0}^{t_{2}}(\frac{t_{2}}{t_{2}-s})^{1-\beta}k(s)ds-\int_{0}^{t_{1}}(\frac{t_{1}}{t_{1}-s})^{1-\beta}k(s)ds\right|\\
&\leq\frac{MR^{\mu}}{\Gamma(\beta)}\int_{t_{1}}^{t_{2}}(\frac{t_{2}}{t_{2}-s})^{1-\beta}s^{\mu(\beta-1)}l(s)ds\\
&\quad+\frac{MR^{\mu}}{\Gamma(\beta)}\int_{0}^{t_{1}}\left[(\frac{t_{1}}{t_{1}-s})^{1-\beta}-(\frac{t_{2}}{t_{2}-s})^{1-\beta}\right]s^{\mu(\beta-1)}l(s)ds\\
&\quad+\frac{1}{\Gamma(\beta)}\int_{t_{1}}^{t_{2}}(\frac{t_{2}}{t_{2}-s})^{1-\beta}k(s)ds\\
&\quad+\frac{1}{\Gamma(\beta)}\int_{0}^{t_{1}}\left[(\frac{t_{1}}{t_{1}-s})^{1-\beta}-(\frac{t_{2}}{t_{2}-s})^{1-\beta}\right]k(s)ds.\\
\end{split}
\end{equation}
Using the same procedure as in the proof of the inequality (4.4) and the inequality (4.7) in [23, Lemma 4.2], for any $x\in U$, we get that $|t_{2}^{1-\beta}(Fx)(t_{2})-t_{1}^{1-\beta}(Fx)(t_{1})|$ tends to 0 as $t_{2}\rightarrow t_{1}$.
By Ascoli-Arzela theorem, we know that $FU$ is relatively compact. Therefore, the operator $F$ is a compact operator.

Step 3. We show that $F$ is continuous, that is $x_{n}\rightarrow x$ implies $Fx_{n}\rightarrow F$x. Since $x_{n}\rightarrow x$ in $U$, then we have
\begin{equation}
(\frac{t}{t-s})^{1-\beta}l(s)\left|\phi(x_{n}(s))-\phi(x(s))\right|\leq2MR^{\mu}(\frac{t}{t-s})^{1-\beta}s^{\mu(\beta-1)}l(s).
\end{equation}
Since $t^{(1-\mu)(1-\beta)}l(t)\in L^{p}[0,T]$, from Lemma 2.3, we know $(\frac{t}{t-s})^{1-\beta}s^{\mu(\beta-1)}l(s)\in L^{1}[0,t]$. For every $s\in(0,t)$, we know
\begin{equation}
(\frac{t}{t-s})^{1-\beta}l(s)\phi(x_{n}(s))\rightarrow (\frac{t}{t-s})^{1-\beta}l(s)\phi(x(s))
\end{equation}
as $n\rightarrow+\infty$. From (3.7) and (3.8), using the Lebesgue dominated convergence theorem, then we have
$$\left|t^{1-\beta}(Fx_{n})(t)-t^{1-\beta}(Fx)(t)\right|=\frac{1}{\Gamma(\beta)}\left|\int_{0}^{t}(\frac{t}{t-s})^{1-\beta}[l(s)\phi(x_{n}(s))-l(s)\phi(x(s))]ds\right|\rightarrow 0$$
as $n\rightarrow +\infty$. Therefore, $t^{1-\beta}(Fx_{n})(t)\rightarrow t^{1-\beta}(Fx)(t)$
pointwise on $[0,T]$ as $n\rightarrow +\infty$. With the fact that $F$ is a compact operator, then we get that $\|Fx_{n}-Fx\|_{1-\beta}\rightarrow0$ as $n\rightarrow+\infty$, which implies that the map $F$ is continuous.

Finally, applying the Schauder fixed point theorem 2.14, we can get that there exists a function $x_{1}\in U$ ($x_{1}$ is a decreasing  function and $x_{1}\in U\subset C^{+}_{1-\beta}(0,T]$) such that $Fx_{1}=x_{1}$. From Theorem 2.12, we know that the function $x_{1}$ is also a solution of the fractional differential equation (1.4).

Since T can be chosen arbitrarily larger, then we know that the fractional differential equation (1.4) has at
least one decreasing solution $x\in C^{+}_{1-\beta}(0,+\infty)$. Thus, we complete the proof.
\end{proof}
Now, we study the asymptotic behavior of fractional differential equations (1.4) by the
monotonicity of functions. Since  $t^{\beta}l(t)$ and $t^{\beta}k(t)$ are nonnegative, continuous and nonincreasing functions on $(0,+\infty)$, then we know that $\lim_{t\rightarrow+\infty}t^{\beta}l(t)$ and $\lim_{t\rightarrow+\infty}t^{\beta}k(t)$ exist. We suppose $\lim_{t\rightarrow+\infty}t^{\beta}l(t)=a$ and $\lim_{t\rightarrow+\infty}t^{\beta}k(t)=b$.

\begin{thm}\label{thm:3.1}
Assume that the hypotheses (1)-(3) hold,
suppose there exists a unique nonnegative constant $k$ that satisfies the following equation
\begin{equation}
x=\frac{\pi}{\Gamma(\beta)\sin(\beta\pi)}(a\phi(x)+b).
\end{equation}
Then the the fractional differential equation (1.4) has at
least one decreasing solution $x\in C^{+}_{1-\beta}(0,+\infty)$ and $\lim_{t\rightarrow+\infty}x(t)=k$.
\end{thm}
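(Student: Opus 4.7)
By Theorem 3.1, under hypotheses (1)--(3) there exists a decreasing solution $x\in C^{+}_{1-\beta}(0,+\infty)$ of (1.4). My plan is to take that solution and show its limit at infinity solves the scalar equation (3.9); uniqueness of the root then identifies the limit with $k$.

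\textbf{Existence of the limit.} Since $x$ is decreasing on $(0,+\infty)$ and nonnegative, $L:=\lim_{t\to+\infty}x(t)$ exists in $[0,+\infty)$. This is the only monotonicity fact I need.

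\textbf{Passing to the limit in the Volterra equation.} By Theorem 2.12, $x$ satisfies
\begin{equation*}
x(t)=x_{0}t^{\beta-1}+\frac{1}{\Gamma(\beta)}\int_{0}^{t}(t-s)^{\beta-1}l(s)\phi(x(s))\,ds+\frac{1}{\Gamma(\beta)}\int_{0}^{t}(t-s)^{\beta-1}k(s)\,ds.
\end{equation*}
The first term tends to $0$ because $\beta<1$. For the third term I apply Lemma 2.8 directly to $\rho=k$: hypothesis (3) gives $k\in C(0,+\infty)\cap L^{1}_{Loc}[0,+\infty)$ and $\lim_{t\to+\infty}t^{\beta}k(t)=b$, hence the integral converges to $\tfrac{b\pi}{\sin(\beta\pi)}$. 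For the middle term I apply Lemma 2.8 to $\rho(s)=l(s)\phi(x(s))$. Continuity of this $\rho$ on $(0,+\infty)$ is immediate from continuity of $l$, $\phi$ and $x$. Local integrability near $0$ follows from the bound $\phi(x(s))\le Mx^{\mu}(s)\le M\|x\|_{1-\beta,[0,1]}^{\mu}\,s^{\mu(\beta-1)}$ combined with H\"older's inequality applied to $s^{(1-\mu)(1-\beta)}l(s)\in L^{p}[0,1]$ and $s^{-(1-\beta)}\in L^{q}[0,1]$, where the latter membership uses $p>1/\beta$ (exactly the same bookkeeping already used in hypothesis (2)). Finally $\lim_{t\to+\infty}t^{\beta}l(t)\phi(x(t))=a\phi(L)$ by continuity of $\phi$ together with $x(t)\to L$ and $t^{\beta}l(t)\to a$. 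Lemma 2.8 then gives
\begin{equation*}
\lim_{t\to+\infty}\int_{0}^{t}(t-s)^{\beta-1}l(s)\phi(x(s))\,ds=\frac{a\phi(L)\pi}{\sin(\beta\pi)}.
\end{equation*}

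\textbf{Identifying $L$ with $k$.} Combining the three limits yields
\begin{equation*}
L=\frac{\pi}{\Gamma(\beta)\sin(\beta\pi)}\bigl(a\phi(L)+b\bigr),
\end{equation*}
so $L$ is a nonnegative solution of (3.9). By the uniqueness hypothesis on roots of (3.9), $L=k$, which is the desired conclusion.

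\textbf{Main obstacle.} The only nontrivial point is verifying the hypotheses of Lemma 2.8 for $\rho(s)=l(s)\phi(x(s))$: namely checking $\rho\in L^{1}_{Loc}[0,+\infty)$ near $s=0$ (where both $l$ and $x$ may be singular), and ensuring that the limit $\lim_{t\to+\infty}t^{\beta}\rho(t)$ actually exists and equals $a\phi(L)$. Everything else is a routine combination of monotone convergence of $x$, Theorem 2.12, and the limit formulas of Lemmas 2.7--2.8.
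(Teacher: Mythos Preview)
Your proof is correct and follows essentially the same route as the paper: invoke Theorem 3.1 for a decreasing nonnegative solution, use monotonicity to get the existence of $L=\lim_{t\to\infty}x(t)$, pass to the limit in the Volterra equation via Lemma 2.8 applied separately to $k$ and to $l\phi(x)$, and conclude by uniqueness of the root of (3.9). Your explicit verification of the $L^{1}_{Loc}$ hypothesis of Lemma 2.8 for $\rho=l\phi(x)$ near $s=0$ is a detail the paper leaves implicit (it is covered by the estimates in the proof of Theorem 3.1), but otherwise the arguments coincide.
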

\begin{proof}
From Theorem 3.1 and Theorem 2.12, we know that the fractional differential equation (1.4) has at
least one decreasing solution $x\in C^{+}_{1-\beta}(0,+\infty)$ that satisfies the following fractional integral equation
\begin{equation}
x(t)=x_{0}t^{\beta-1}+\frac{1}{\Gamma(\beta)}\int_{0}^{t}(t-s)^{\beta-1}[l(s)\phi(x(s))+k(s)]ds,   \qquad t\in(0,+\infty).
\end{equation}
Since $x(t)$ is a nonnegative and decreasing function on $(0,+\infty)$, then we know that $\lim_{t\rightarrow+\infty}x(t)$ exists. Using Lemma 2.8, we get
\begin{equation}
\begin{split}
\lim_{t\rightarrow+\infty}x(t)
&=\lim_{t\rightarrow+\infty}\left[x_{0}t^{\beta-1}+\frac{1}{\Gamma(\beta)}\int_{0}^{t}(t-s)^{\beta-1}\left[l(s)\phi(x(s))+k(s)\right]ds\right]\\
&=\frac{1}{\Gamma(\beta)}\left[\lim_{t\rightarrow+\infty}\int_{0}^{t}(t-s)^{\beta-1}l(s)\phi(x(s))ds+\lim_{t\rightarrow+\infty}\int_{0}^{t}(t-s)^{\beta-1}k(s)ds\right]\\
&=\frac{1}{\Gamma(\beta)}\left[\frac{a\pi}{\sin(\beta\pi)}\phi(\lim_{t\rightarrow+\infty}x(t))+\frac{b\pi}{\sin(\beta\pi)}\right].
\end{split}
\end{equation}
From (3.9), we get $\lim_{t\rightarrow+\infty}x(t)=k$. Thus, we complete the proof.
\end{proof}
From (3.11), we can immediately obtain the following results.
\begin{thm}\label{thm:3.1}
Assume that the hypotheses (1)-(3) hold and $a=0$.
Then the the fractional differential equation (1.4) has at
least one decreasing solution $x\in C^{+}_{1-\beta}(0,+\infty)$ and $\lim_{t\rightarrow+\infty}x(t)=\frac{b\pi}{\Gamma(\beta)\sin(\beta\pi)}$.
\end{thm}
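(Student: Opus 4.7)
The plan is to treat Theorem 3.3 as an immediate specialization of Theorem 3.2. When $a=0$, equation (3.9) reduces to the trivial algebraic equation $x=\frac{b\pi}{\Gamma(\beta)\sin(\beta\pi)}$, which has the unique nonnegative root $k=\frac{b\pi}{\Gamma(\beta)\sin(\beta\pi)}$. So the uniqueness hypothesis of Theorem 3.2 is automatically satisfied, and both conclusions (existence of a decreasing solution in $C_{1-\beta}^{+}(0,+\infty)$ and the stated limit) follow at once. No new fixed-point construction is needed.

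To make this self-contained I would also re-derive the limit directly from formula (3.11) in the proof of Theorem 3.2, since the reader is invited to read Theorem 3.3 off that formula. First, invoke Theorem 3.1 to produce a decreasing solution $x\in C_{1-\beta}^{+}(0,+\infty)$ and use Theorem 2.12 to recast it as the Volterra integral equation (3.10). Because $x$ is nonnegative and monotone, $L:=\lim_{t\to+\infty}x(t)$ exists in $[0,+\infty)$. Passing to the limit in each term of (3.10): $x_0 t^{\beta-1}\to 0$; Lemma 2.8 applied to the kernel $k(s)$ yields $\lim_{t\to+\infty}\int_{0}^{t}(t-s)^{\beta-1}k(s)\,ds=\frac{b\pi}{\sin(\beta\pi)}$; and Lemma 2.7 applied to the kernel $l(s)\phi(x(s))$ yields $\lim_{t\to+\infty}\int_{0}^{t}(t-s)^{\beta-1}l(s)\phi(x(s))\,ds=0$. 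Dividing by $\Gamma(\beta)$ and adding gives $L=\frac{b\pi}{\Gamma(\beta)\sin(\beta\pi)}$.

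The only point that requires a small check is the hypothesis of Lemma 2.7 for the $l\phi(x)$ term, namely $\lim_{t\to+\infty}t^{\beta}l(t)\phi(x(t))=0$. Here hypothesis (2) gives $t^{\beta}l(t)\to a=0$, and monotonicity of $x$ combined with the fact that $x\in C_{1-\beta}^{+}(0,+\infty)$ forces $x(t)\leq x(1)$ for all $t\geq 1$, so the nondecreasing continuous function $\phi$ satisfies $\phi(x(t))\leq \phi(x(1))<\infty$ on $[1,+\infty)$. Thus $t^{\beta}l(t)\phi(x(t))\to 0$, Lemma 2.7 is legitimately applied, and the argument closes. This is the only mildly nontrivial step; everything else is direct substitution into Theorem 3.2.
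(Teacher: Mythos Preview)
Your proposal is correct and follows essentially the same route as the paper. The paper simply says ``From (3.11), we can immediately obtain the following results'' before stating Theorem 3.3, and your argument is precisely an unpacking of that remark: either specialize Theorem 3.2 to $a=0$ (where equation (3.9) trivially has the unique root $\frac{b\pi}{\Gamma(\beta)\sin(\beta\pi)}$), or read the limit directly off (3.11); your boundedness check $\phi(x(t))\leq\phi(x(1))$ for $t\geq 1$ is the natural justification the paper leaves implicit.
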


\begin{thm}\label{thm:3.1}
Assume that the hypotheses (1)-(3) hold and $a=b=0$.
Then the the fractional differential equation (1.4) has at
least one decreasing solution $x\in C^{+}_{1-\beta}(0,+\infty)$ and $\lim_{t\rightarrow+\infty}x(t)=0$.
\end{thm}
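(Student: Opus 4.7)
The plan is to derive Theorem 3.4 as a direct specialization of Theorem 3.2. First I would invoke Theorem 3.1 to guarantee the existence of at least one decreasing solution $x \in C^+_{1-\beta}(0,+\infty)$ of equation (1.4), and Theorem 2.12 to record that this solution satisfies the Volterra integral identity
\begin{equation*}
x(t)=x_{0}t^{\beta-1}+\frac{1}{\Gamma(\beta)}\int_{0}^{t}(t-s)^{\beta-1}[l(s)\phi(x(s))+k(s)]\,ds
\end{equation*}
on $(0,+\infty)$. This reduces the asymptotic claim to showing that the right-hand side tends to $0$ as $t \to +\infty$.

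Next I would observe that since $x$ is decreasing and nonnegative on $(0,+\infty)$, the limit $L = \lim_{t\to+\infty} x(t) \ge 0$ exists, so $x(t)$ is uniformly bounded on $[1,+\infty)$. Combined with continuity of $\phi$, this yields a constant $C$ with $|\phi(x(t))| \le C$ for all large $t$. Together with the hypothesis $a = \lim_{t\to+\infty} t^{\beta} l(t) = 0$, this gives
\begin{equation*}
\lim_{t\to+\infty} t^{\beta} l(t)\phi(x(t)) = 0,
\end{equation*}
while $b = 0$ gives $\lim_{t\to+\infty} t^{\beta} k(t) = 0$ directly. The integrability and continuity properties of $l\phi(x)$ and $k$ on $(0,+\infty)$ follow from hypotheses (2)-(3) and the continuity of $x$, so both functions lie in $C(0,+\infty)\cap L^1_{Loc}[0,+\infty)$. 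Lemma 2.7 then applies to each, yielding
\begin{equation*}
\lim_{t\to+\infty}\int_{0}^{t}(t-s)^{\beta-1}l(s)\phi(x(s))\,ds = 0,\qquad \lim_{t\to+\infty}\int_{0}^{t}(t-s)^{\beta-1}k(s)\,ds = 0.
\end{equation*}
Since $x_0 t^{\beta-1} \to 0$ as well, the integral identity forces $L = 0$.

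Equivalently, one could simply cite Theorem 3.2 with $a=b=0$: in that case equation (3.9) collapses to $x = 0$, whose unique nonnegative solution is $k=0$, and the conclusion $\lim_{t\to+\infty} x(t) = 0$ is exactly what Theorem 3.2 provides. I do not anticipate any genuine obstacle — the only point requiring care is the verification that the hypotheses of Lemma 2.7 (continuity on $(0,+\infty)$ and local integrability) hold for the integrand $l(s)\phi(x(s))$, which is immediate from hypothesis (2) together with the continuity of $x$ and $\phi$.
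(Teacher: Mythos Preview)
Your proposal is correct and matches the paper's own treatment: the paper derives Theorem 3.4 (and Theorem 3.3) as immediate consequences of the limit computation (3.11) in the proof of Theorem 3.2, which is precisely your second approach of citing Theorem 3.2 with $a=b=0$ so that (3.9) reduces to $x=0$. Your first, more detailed argument via Lemma 2.7 is just the $a=b=0$ specialization of that same computation spelled out explicitly.
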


In Theorem 3.2, if the equation (3.9) has several different nonnegative solutions, we can use the following method to study the asymptotic behavior of the equation (1.4). Let $x^{*}$ be the largest solution of the equation (3.9), then the following equation
\begin{equation}
x=\frac{\pi}{\Gamma(\beta)\sin(\beta\pi)}(a\phi(x+x^{*})+b)-x^{*}.
\end{equation}
has a unique nonnegative solution $x=0$.

\begin{thm}\label{thm:3.1}
Assume that the hypotheses (1)-(3) hold, let $x^{*}$ be the largest solution of the equation (3.9). Then the the fractional differential equation (1.4) has at
least one decreasing solution $x\in C^{+}_{1-\beta}(0,+\infty)$ and $\lim_{t\rightarrow+\infty}x(t)=x^{*}$.
\end{thm}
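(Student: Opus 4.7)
The plan is to mirror the proof of Theorem 3.1, but to apply the Schauder fixed point theorem on a smaller invariant subset that forces every iterate of $F$ to lie above the level $x^{*}$. Once a decreasing solution $x$ with $x(t)\geq x^{*}$ is produced, its limit $L=\lim_{t\to+\infty}x(t)$ is automatically a nonnegative solution of (3.9) satisfying $L\geq x^{*}$; since $x^{*}$ is by hypothesis the largest such solution, this forces $L=x^{*}$.

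For each fixed $T>0$ and a sufficiently large $R$ (chosen as in Theorem 3.1 and additionally so that $R\geq x^{*}T^{1-\beta}$), I would work with the set
\[
U^{*}=\{x\in C^{+}_{1-\beta}(0,T]:x\text{ is decreasing},\ \|x\|_{1-\beta}\leq R,\ x(t)\geq x^{*}\text{ for all }t\in(0,T]\},
\]
which is nonempty (it contains the constant function $x^{*}$), closed, bounded and convex. All the ingredients needed to show that $F$ is a continuous, compact self-map (Steps 1--3 of the proof of Theorem 3.1) transfer verbatim; the only genuinely new point is the pointwise lower bound $(Fx)(t)\geq x^{*}$. For $x\in U^{*}$, monotonicity of $\phi$ together with the inequalities $s^{\beta}l(s)\geq a$ and $s^{\beta}k(s)\geq b$, which follow from the nonincreasing hypotheses (2) and (3), give
\[
(Fx)(t)\geq x_{0}t^{\beta-1}+\frac{a\phi(x^{*})+b}{\Gamma(\beta)}\int_{0}^{t}(t-s)^{\beta-1}s^{-\beta}\,ds=x_{0}t^{\beta-1}+\frac{(a\phi(x^{*})+b)\pi}{\Gamma(\beta)\sin(\beta\pi)},
\]
and the right hand side equals $x_{0}t^{\beta-1}+x^{*}\geq x^{*}$ by the defining equation (3.9) for $x^{*}$ together with the reflection identity $\Gamma(\beta)\Gamma(1-\beta)=\pi/\sin(\beta\pi)$. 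Hence $F(U^{*})\subseteq U^{*}$.

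Schauder's theorem (Theorem 2.14) then supplies a fixed point $x_{T}\in U^{*}$ for every $T>0$. Letting $T\to+\infty$ and extracting a locally uniform limit by a standard Arzel\`a--Ascoli diagonal argument, as in the concluding paragraph of the proof of Theorem 3.1 (the properties ``decreasing'' and ``bounded below by $x^{*}$'' both pass to the limit), one obtains a decreasing solution $x\in C^{+}_{1-\beta}(0,+\infty)$ of (1.4) with $x(t)\geq x^{*}$ for every $t>0$. Then $L=\lim_{t\to+\infty}x(t)$ exists and satisfies $L\geq x^{*}$; repeating the computation performed in (3.11) and using Lemma 2.8 on the integral representation of $x$ shows that $L$ itself solves equation (3.9), and the maximality of $x^{*}$ among nonnegative solutions of (3.9) forces $L=x^{*}$.

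The main (relatively small) obstacle in the plan is the lower bound $(Fx)\geq x^{*}$: it is precisely the point where the equation defining $x^{*}$, the nonincreasing hypotheses on $t^{\beta}l(t)$ and $t^{\beta}k(t)$, and the reflection formula must cooperate. Apart from that, the argument is a direct combination of the fixed point machinery used in Theorem 3.1 with the limit computation used in Theorem 3.2.
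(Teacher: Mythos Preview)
Your proposal is correct, and it reaches the same conclusion by a route that differs from the paper's in two technical respects. The paper shifts the unknown, defining $(Gz)(t)=(F(z+x^{*}))(t)-x^{*}$ and then reproducing the Theorem 3.1 argument for $G$ on the usual set $U$; nonnegativity of $Gz$ is obtained by bounding $(Gz)(t)$ below by the nonincreasing function $x_{0}t^{\beta-1}+\frac{\phi(x^{*})}{\Gamma(\beta)}\int_{0}^{t}(t-s)^{\beta-1}l(s)\,ds+\frac{1}{\Gamma(\beta)}\int_{0}^{t}(t-s)^{\beta-1}k(s)\,ds-x^{*}$ (Lemma 2.6) and then passing to the limit (Lemma 2.8) to see that this lower bound equals $0$. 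You instead keep the original operator $F$ and shrink the domain to $U^{*}=\{x\in U:x\geq x^{*}\}$, establishing the invariance $(Fx)(t)\geq x^{*}$ by the direct pointwise estimates $l(s)\geq as^{-\beta}$, $k(s)\geq bs^{-\beta}$ and an explicit Beta-integral computation; this bypasses the monotonicity/limit lemmas at that step. At the end, the paper identifies $\lim z=0$ as the unique nonnegative root of the shifted equation (3.12), whereas you identify $\lim x$ as a root of (3.9) no smaller than $x^{*}$ and invoke maximality of $x^{*}$. Both arguments encode the same idea (force the solution to stay above $x^{*}$); yours is slightly more elementary in the lower-bound step, while the paper's shift makes the application of the Theorem 3.1 machinery literally identical rather than parallel. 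One cosmetic point: the constant function $x^{*}$ is only nonincreasing, so to exhibit a genuinely decreasing element of $U^{*}$ take for instance $x_{0}t^{\beta-1}+x^{*}$ (and enlarge $R$ to at least $x_{0}+x^{*}T^{1-\beta}$).
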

\begin{proof}
Let
\begin{equation}
(Gz)(t)=x_{0}t^{\beta-1}+\frac{1}{\Gamma(\beta)}\int_{0}^{t}(t-s)^{\beta-1}\left[l(s)\phi(z(s)+x^{*})+k(s)\right]ds-x^{*}.
\end{equation}
For any $z\in C^{+}_{1-\beta}(0,+\infty)$, we get
\begin{equation}
\begin{split}
(Gz)(t)
&\geq x_{0}t^{\beta-1}+\frac{1}{\Gamma(\beta)}\int_{0}^{t}(t-s)^{\beta-1}\left[l(s)\phi(x^{*})+k(s)\right]ds-x^{*}\\
&=x_{0}t^{\beta-1}+\frac{\phi(x^{*})}{\Gamma(\beta)}\int_{0}^{t}(t-s)^{\beta-1}l(s)ds+\frac{1}{\Gamma(\beta)}\int_{0}^{t}(t-s)^{\beta-1}k(s)ds-x^{*}.
\end{split}
\end{equation}
Since $t^{\beta}l(t)$ and $t^{\beta}k(t)$ are nonincreasing functions on $(0,+\infty)$, by Lemma 2.6, we get that the right-hand side of (3.14) is a nonincreasing function on $(0,+\infty)$. For $t>0$, we have
\begin{equation}
\begin{split}
(Gz)(t)
&\geq \lim_{t\rightarrow+\infty}\left[x_{0}t^{\beta-1}+\frac{\phi(x^{*})}{\Gamma(\beta)}\int_{0}^{t}(t-s)^{\beta-1}l(s)ds+\frac{1}{\Gamma(\beta)}\int_{0}^{t}(t-s)^{\beta-1}k(s)ds-x^{*}\right]\\
&=\frac{\pi}{\Gamma(\beta)\sin(\beta\pi)}(a\phi(x^{*})+b)-x^{*}\\
&=0.
\end{split}
\end{equation}
This implies $(Gz)(t)\geq0$ when $z(t)>0$. The remaining proof is similar to that of Theorem 3.1, we can get that the fractional integral equation (3.13) has at least one decreasing solution $z\in C^{+}_{1-\beta}(0,+\infty)$ such that $Gz=z$. Since $z(t)$ is a nonnegative and decreasing function on $(0,+\infty)$, then we know that $\lim_{t\rightarrow+\infty}z(t)$ exists. using Lemma 2.8, we get
\begin{equation}
\begin{split}
\lim_{t\rightarrow+\infty}z(t)
&=\lim_{t\rightarrow+\infty}\left[x_{0}t^{\beta-1}+\frac{1}{\Gamma(\beta)}\int_{0}^{t}(t-s)^{\beta-1}\left[l(s)\phi(z(s)+x^{*})+k(s)\right]ds-x^{*}\right]\\
&=\frac{1}{\Gamma(\beta)}\left[\lim_{t\rightarrow+\infty}\int_{0}^{t}(t-s)^{\beta-1}l(s)\phi(z(s)+x^{*})ds+\lim_{t\rightarrow+\infty}\int_{0}^{t}(t-s)^{\beta-1}k(s)ds\right]-x^{*}\\
&=\frac{1}{\Gamma(\beta)}\left[\frac{a\pi}{\sin(\beta\pi)}\phi(\lim_{t\rightarrow+\infty}z(t)+x^{*})+\frac{b\pi}{\sin(\beta\pi)}\right]-x^{*}.
\end{split}
\end{equation}
From (3.12), we get $\lim_{t\rightarrow+\infty}z(t)=0$. Since $Gz=z$, then we get
\begin{equation}
z(t)+x^{*}=x_{0}t^{\beta-1}+\frac{1}{\Gamma(\beta)}\int_{0}^{t}(t-s)^{\beta-1}\left[l(s)\phi(z(s)+x^{*})+k(s)\right]ds.
\end{equation}
Therefore, from (3.17), we know that the fractional differential equation (1.4) has at
least one decreasing solution $y=z+x^{*}\in C^{+}_{1-\beta}(0,+\infty)$ and $$\lim_{t\rightarrow+\infty}y(t)=\lim_{t\rightarrow+\infty}z(t)+x^{*}=x^{*}.$$ Thus, we complete the proof.
\end{proof}

In the above Theorems, we study the asymptotic
behavior of solution for the fractional differential equation (1.4) when $t^{\beta}l(t)$ and $t^{\beta}k(t)$ are  nonnegative nonincreasing functions on $(0,+\infty)$ (If $t^{\beta}l(t)$ is a nonnegative nonincreasing functions on $(0,+\infty)$, then we can get that $l(t)$ is also a nonnegative nonincreasing functions on $(0,+\infty)$). Now, we study the asymptotic behavior of the fractional differential equation (1.4) when $l(t)$ and $k(t)$ are non-monotonic functions on $(0,+\infty)$.
\begin{thm}\label{thm:3.1}
Let $0<\beta<\gamma<1$ and $0\leq\mu\leq1$. Let $l(t)$ be a nonnegative function with $t^{(1-\mu)(1-\beta)}l(t)\in
C(0,+\infty)\bigcap L^{p}_{Loc}[0,+\infty)$, where $p>1/\beta$, and there exists a nonnegative constant $K$ such that $t^{\gamma}l(t)\leq K$
for $t\in[1,+\infty)$. Let $k(t)$ be a nonnegative function with $t^{1-\beta}k(t)\in
C(0,+\infty)\bigcap L^{p}_{Loc}[0,+\infty)$ and $\lim_{t\rightarrow+\infty}t^{\beta}k(t)=b\geq0$.
Let $\phi(t)$ be a nonnegative and continuous function on $[0,+\infty)$, and there exists a nonnegative constant $M$ such that $|\phi(t)|\leq M t^{\mu}$ for all $t\in[0,+\infty)$. Then the fractional differential equation (1.4) has at least one solution $x(t)\in
C^{+}_{1-\beta}(0,+\infty)$ and $\lim_{t\rightarrow+\infty}x(t)=\frac{b\pi}{\Gamma(\beta)\sin(\beta\pi)}$.
\end{thm}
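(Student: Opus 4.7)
The plan is first to establish existence of a nonnegative solution on $(0,+\infty)$ via Theorem 2.13 and then to extract the limit term by term from the Volterra integral form. For existence, I would extend $\phi$ to $\mathbb{R}$ by setting $\tilde\phi(u):=\phi(\max\{u,0\})$, which is continuous, nonnegative, and still satisfies $\tilde\phi(u)\le M|u|^{\mu}$. The modified right-hand side $\tilde f(t,u)=l(t)\tilde\phi(u)+k(t)$ satisfies the growth hypothesis of Theorem 2.13 with $Ml$ in place of $l$, and the integrability hypotheses on $l$ and $k$ in the present theorem are exactly those required by Theorem 2.13, so that theorem produces a solution $x\in C_{1-\beta}(0,+\infty)$ of $D^{\beta}_{0^{+}}x=\tilde f(t,x)$. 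By Theorem 2.12, this solution satisfies
\[
x(t)=x_{0}t^{\beta-1}+\frac{1}{\Gamma(\beta)}\int_{0}^{t}(t-s)^{\beta-1}\bigl[l(s)\tilde\phi(x(s))+k(s)\bigr]\,ds,
\]
and since $x_{0}>0$ while every other quantity on the right is nonnegative, $x(t)\ge x_{0}t^{\beta-1}>0$. In particular $\tilde\phi(x(t))=\phi(x(t))$ throughout $(0,+\infty)$, so $x\in C^{+}_{1-\beta}(0,+\infty)$ and $x$ solves $(1.4)$.

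Next, I would analyze the three terms of this integral representation separately. Clearly $x_{0}t^{\beta-1}\to 0$ as $t\to+\infty$. For the $k$-integral, Lemma 2.8 applies directly to $\rho=k$: continuity and local integrability of $k$ follow from the hypothesis on $t^{1-\beta}k(t)$ via H\"older's inequality, and the hypothesis $\lim_{t\to+\infty}t^{\beta}k(t)=b$ gives $\int_{0}^{t}(t-s)^{\beta-1}k(s)\,ds\to b\pi/\sin(\beta\pi)$. The crux of the proof is showing that the $l$-integral tends to $0$, which I would do via Lemma 2.7 applied to $\rho(t)=l(t)\phi(x(t))$. Continuity and local integrability of this $\rho$ are routine; integrability near $0$ uses $x(s)\le s^{\beta-1}\sup_{0<s\le 1}s^{1-\beta}x(s)$ together with H\"older and $t^{(1-\mu)(1-\beta)}l(t)\in L^{p}[0,1]$, following the same pattern as the estimates in Step 1 of the proof of Theorem 3.1. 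The essential ingredient is the decay $\lim_{t\to+\infty}t^{\beta}l(t)\phi(x(t))=0$. This follows from $l(t)\le Kt^{-\gamma}$ on $[1,+\infty)$ and $\gamma>\beta$ the moment $x$ is known to be uniformly bounded on $[1,+\infty)$, since then $t^{\beta}l(t)\phi(x(t))\le MK\bigl(\sup_{t\ge 1}x(t)\bigr)^{\mu}t^{\beta-\gamma}\to 0$.

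The main obstacle is therefore proving the uniform boundedness of $x$ on $[1,+\infty)$. I would derive it from the integral equation by isolating the already-bounded pieces and bounding the $l$-integrand via $l(s)\le Ks^{-\gamma}$ for $s\ge 1$, arriving at
\[
x(t)\le C_{0}+\frac{MK}{\Gamma(\beta)}\int_{1}^{t}(t-s)^{\beta-1}s^{-\gamma}x(s)^{\mu}\,ds,\qquad t\ge 1,
\]
where $C_{0}$ absorbs $x_{0}t^{\beta-1}$, the convergent $k$-integral, and the finite contribution from $s\in(0,1]$ to the $l$-integral. Because $\gamma>\beta$ and $0\le\mu\le 1$, a weakly singular Gronwall-type inequality then produces a uniform bound on $x$: for $\mu<1$ this is a direct nonlinear weakly singular Gronwall estimate, while the borderline linear case $\mu=1$ exploits the decay $\int_{1}^{t}(t-s)^{\beta-1}s^{-\gamma}\,ds=O(t^{\beta-\gamma})$ together with standard kernel iteration. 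Once boundedness is in place, Lemma 2.7 yields $\lim_{t\to+\infty}\int_{0}^{t}(t-s)^{\beta-1}l(s)\phi(x(s))\,ds=0$, and combining the three limits gives $\lim_{t\to+\infty}x(t)=b\pi/(\Gamma(\beta)\sin(\beta\pi))$.
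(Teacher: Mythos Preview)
Your proposal is correct and follows the same three–step skeleton as the paper (existence via Theorem~2.13, a uniform bound for $x$ on $[1,+\infty)$, then Lemmas~2.7--2.8 for the limit); you are in fact more careful than the paper about the existence step, since you explicitly extend $\phi$ to $\mathbb{R}$ and then observe $x(t)\ge x_{0}t^{\beta-1}>0$ to recover $x\in C^{+}_{1-\beta}$.

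The only substantive difference is in the boundedness argument, and here your description is a little loose. The paper does not invoke a weakly singular Gronwall inequality directly; instead it applies Lemma~2.3 with $2\beta-1<1/p<\beta$ to replace the singular convolution by the factor $t^{2\beta-1-1/p}\bigl(\int_{0}^{t}s^{p(1-\beta)}l^{p}(s)x^{p\mu}(s)\,ds\bigr)^{1/p}$, raises to the $p$th power, uses $w^{\mu}\le w+1$, and applies the \emph{ordinary} Gronwall inequality, the resulting exponential being finite because $\int_{1}^{\infty}s^{p\beta-1}l^{p}(s)\,ds<\infty$ (here is where $\gamma>\beta$ enters). Your route avoids Lemma~2.3 but, as written, neither ``nonlinear weakly singular Gronwall'' nor ``kernel iteration'' is quite the right tool: what you actually need is the elementary observation that
\[
\int_{1}^{t}(t-s)^{\beta-1}s^{-\gamma}\,ds\le\int_{0}^{t}(t-s)^{\beta-1}s^{-\gamma}\,ds=B(\beta,1-\gamma)\,t^{\beta-\gamma}
\]
is \emph{uniformly bounded} for $t\ge1$ since $\gamma>\beta$. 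With this in hand, setting $Y(T)=\sup_{1\le t\le T}x(t)$ gives $Y(T)\le C_{0}+C'\,Y(T)^{\mu}$ for $\mu<1$, which bounds $Y(T)$ algebraically; for $\mu=1$ one chooses $T_{0}$ so large that $C\,B(\beta,1-\gamma)T_{0}^{\beta-\gamma}<1$ and closes by the same sup-norm bootstrap on $[T_{0},T]$. Once you make this uniform-in-$t$ kernel bound explicit, your argument is complete and arguably more elementary than the paper's; the paper's route has the advantage of treating all $0\le\mu\le1$ uniformly without a case split.
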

\begin{proof}
Using Theorem 2.13, we know that the fractional differential equation (1.4) has at
least one solution $x\in C^{+}_{1-\beta}(0,+\infty)$ that satisfies the following fractional integral equation
\begin{equation}
x(t)=x_{0}t^{\beta-1}+\frac{1}{\Gamma(\beta)}\int_{0}^{t}(t-s)^{\beta-1}[l(s)\phi(x(s))+k(s)]ds,   \qquad t\in(0,+\infty).
\end{equation}
Now, we will prove that the solution $x$ of equation (3.18) is a bounded function on $[1,+\infty)$. Let $\beta>1/p>2\beta-1$, using Lemma 2.3, we get
\begin{equation}
\begin{split}
x(t)
&\leq x_{0}t^{\beta-1}+\frac{1}{\Gamma(\beta)}\int_{0}^{t}(t-s)^{\beta-1}k(s)ds+\frac{M}{\Gamma(\beta)}\int_{0}^{t}(t-s)^{\beta-1}l(s)x^{\mu}(s)ds\\
&\leq x_{0}t^{\beta-1}+\frac{1}{\Gamma(\beta)}\int_{0}^{t}(t-s)^{\beta-1}k(s)ds\\
&\quad+\frac{2^{1/q}Mt^{2\beta-1-1/p}}{\Gamma(\beta)(q\beta-q+1)^{1/q}}\left(\int_{0}^{t}s^{p(1-\beta)}l^{p}(s)x^{p\mu}(s)ds\right)^{1/p}.\\
\end{split}
\end{equation}
Since $\int_{0}^{t}(t-s)^{\beta-1}k(s)ds$ is a continuous function on $(0,+\infty)$, and by Lemma 2.8, we get that $\int_{0}^{t}(t-s)^{\beta-1}k(s)ds$ is a continuous and bounded function on $[1,+\infty)$. We denote $$K_{1}=\sup_{1\leq t<+\infty}\frac{1}{\Gamma(\beta)}\int_{0}^{t}(t-s)^{\beta-1}k(s)ds.$$
For $t\in[1,+\infty)$, from (3.19) and $\beta>1/p>2\beta-1$, we have
\begin{equation}
x(t)\leq x_{0}+K_{1}+\frac{2^{1/q}Mt^{2\beta-1-1/p}}{\Gamma(\beta)(q\beta-q+1)^{1/q}}\left(\int_{0}^{t}s^{p(1-\beta)}l^{p}(s)x^{p\mu}(s)ds\right)^{1/p}
\end{equation}
and
\begin{equation}
\begin{split}
x^{p}(t)&\leq2^{p-1}(x_{0}+K_{1})^{p}+\frac{4^{p-1}M^{p}t^{2p\beta-p-1}}{\Gamma^{p}(\beta)(q\beta-q+1)^{p/q}}\int_{0}^{t}s^{p(1-\beta)}l^{p}(s)x^{p\mu}(s)ds\\
&=2^{p-1}(x_{0}+K_{1})^{p}+\frac{4^{p-1}M^{p}t^{2p\beta-p-1}}{\Gamma^{p}(\beta)(q\beta-q+1)^{p/q}}\int_{0}^{1}s^{p(1-\beta)}l^{p}(s)x^{p\mu}(s)ds\\
&\quad+\frac{4^{p-1}M^{p}t^{2p\beta-p-1}}{\Gamma^{p}(\beta)(q\beta-q+1)^{p/q}}\int_{1}^{t}s^{p(1-\beta)}l^{p}(s)x^{p\mu}(s)ds\\
&\leq2^{p-1}(x_{0}+K_{1})^{p}+\frac{4^{p-1}M^{p}R_{1}^{p\mu}}{\Gamma^{p}(\beta)(q\beta-q+1)^{p/q}}\int_{0}^{1}s^{p(1-\mu)(1-\beta)}l^{p}(s)ds\\
&\quad+\frac{4^{p-1}M^{p}}{\Gamma^{p}(\beta)(q\beta-q+1)^{p/q}}\int_{1}^{t}s^{p\beta-1}l^{p}(s)x^{p\mu}(s)ds\\
&=m_{1}+m_{2}\int_{1}^{t}s^{p\beta-1}l^{p}(s)x^{p\mu}(s)ds,\\
\end{split}
\end{equation}
where $R_{1}=\sup_{0<t\leq1}t^{1-\beta}x(t)$. Let $w(t)=x^{p}(t)$, from (3.21), we get
\begin{equation}
w(t)\leq m_{1}+m_{2}\int_{1}^{t}s^{p\beta-1}l^{p}(s)w^{\mu}(s)ds\leq m_{1}+m_{2}\int_{1}^{t}s^{p\beta-1}l^{p}(s)(w(s)+1)ds.
\end{equation}
Using Gronwall inequality, we have
\begin{equation}
w(t)\leq (m_{1}+1)\exp\left(\int_{1}^{t}m_{2}s^{p\beta-1}l^{p}(s)ds\right), \qquad t\in[1,+\infty).
\end{equation}
Since $t^{\gamma}l(t)\leq K$ for $t\in[1,+\infty)$, then we get that $\int_{1}^{+\infty}s^{p\beta-1}l^{p}(s)ds$ is convergent. Then we obtain
\begin{equation}
x^{p}(t)=w(t)\leq (m_{1}+1)\exp\left(\int_{1}^{+\infty}m_{2}s^{p\beta-1}l^{p}(s)ds\right), \qquad t\in[1,\infty).
\end{equation}
Thus, we obtain that $x(t)$ is a bounded function on $[1,+\infty)$, and we denote $R_{2}=\sup_{1\leq t<+\infty}x(t)$.
From (3.18), since $\lim_{t\rightarrow+\infty}t^{\beta}l(t)x^{\mu}(t)\leq\lim_{t\rightarrow+\infty}R_{2}^{\mu}t^{\beta}l(t)=0$ and using Lemma 2.7 and Lemma 2.8, we get
\begin{equation}
\begin{split}
\frac{b\pi}{\Gamma(\beta)\sin(\beta\pi)}&=\lim_{t\rightarrow+\infty}\frac{1}{\Gamma(\beta)}\int_{0}^{t}(t-s)^{\beta-1}k(s)ds\\
&\leq\lim_{t\rightarrow+\infty}x(t)\\
&=\lim_{t\rightarrow+\infty}\left[x_{0}t^{\beta-1}+\frac{1}{\Gamma(\beta)}\int_{0}^{t}(t-s)^{\beta-1}[l(s)\phi(x(s))+k(s)]ds\right]\\
&\leq\frac{M}{\Gamma(\beta)}\lim_{t\rightarrow+\infty}\int_{0}^{t}(t-s)^{\beta-1}l(s)x^{\mu}(s)ds+\frac{b\pi}{\Gamma(\beta)\sin(\beta\pi)}\\
&=\frac{b\pi}{\Gamma(\beta)\sin(\beta\pi)}.\\
\end{split}
\end{equation}
Therefore, the fractional differential equation (1.4) has at least one solution $x(t)\in
C^{+}_{1-\beta}(0,+\infty)$ and $\lim_{t\rightarrow+\infty}x(t)=\frac{b\pi}{\Gamma(\beta)\sin(\beta\pi)}$.
\end{proof}
In the following content, we will investigate the  the asymptotic behavior
of the fractional differential equation (1.1).
\begin{thm}\label{thm:3.1}
Let $0<\beta<\gamma<1$ and $0\leq\mu\leq1$. Suppose that $t^{(1-\mu)(1-\beta)}l(t), t^{(1-\mu)(1-\beta)}l_{1}(t)\in
C(0,+\infty)\bigcap L^{p}_{Loc}[0,+\infty)$, where $p>1/\beta$, and there exists a nonnegative constant $K$ such that
$t^{\gamma}|l(t)|\leq K$ and $t^{\gamma}|l_{1}(t)|\leq K$ for all $t\in [1,+\infty)$. Suppose that $t^{1-\beta}k(t), t^{1-\beta}k_{1}(t)\in
C(0,+\infty)\bigcap L^{p}_{Loc}[0,+\infty)$, and $\lim_{t\rightarrow+\infty}t^{\beta}k(t)=\lim_{t\rightarrow+\infty}t^{\beta}k_{1}(t)=b$. Suppose $f: (0,+\infty)\times\mathbb{R}\rightarrow\mathbb{R}$ is a continuous
function and
\begin{equation}
l_{1}(t)x^{\mu}+k_{1}(t)\leq f(t, x)\leq l(t)x^{\mu}+k(t)
\end{equation}
for all $(t,x)\in (0,+\infty)\times\mathbb{R}$. Then the fractional differential equation
(1.1) has at least one solution $x(t)\in
C_{1-\beta}(0,+\infty)$ and $\lim_{t\rightarrow+\infty}x(t)=\frac{b\pi}{\Gamma(\beta)\sin(\beta\pi)}$.
\end{thm}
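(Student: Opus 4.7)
The plan is to follow the same three-stage architecture as the proof of Theorem 3.6, using the two-sided sandwich on $f$ to transfer the asymptotic conclusion from auxiliary upper and lower integral expressions (to which Lemmas 2.7 and 2.8 apply directly) down to the actual solution by squeezing.

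First, I would establish existence via Theorem 2.13. From the hypothesis $l_1(t)x^{\mu}+k_1(t)\leq f(t,x)\leq l(t)x^{\mu}+k(t)$ one deduces the one-sided bound
$$|f(t,x)|\leq \bigl(|l(t)|+|l_1(t)|\bigr)|x|^{\mu}+\bigl(|k(t)|+|k_1(t)|\bigr),$$
and the combined coefficient functions satisfy the required $C(0,+\infty)\cap L^{p}_{Loc}[0,+\infty)$ properties with the weights $t^{(1-\mu)(1-\beta)}$ and $t^{1-\beta}$, so Theorem 2.13 produces a solution $x\in C_{1-\beta}(0,+\infty)$ of (1.1); by Theorem 2.12 it satisfies the Volterra integral equation.

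Next I would show that $x$ is bounded on $[1,+\infty)$. Starting from the Volterra form and using the above upper bound on $|f|$, the proof of Theorem 3.6 applies almost verbatim: choose $p>1/\beta$ with $\beta>1/p>2\beta-1$, apply Lemma 2.3 to estimate the singular integral, split into $\int_0^1$ (a fixed constant involving $R_1=\sup_{0<t\leq 1}t^{1-\beta}|x(t)|$) plus $\int_1^t$, and then use Gronwall's inequality on $w(t)=|x(t)|^p$. The hypothesis $t^{\gamma}|l(t)|\leq K$ on $[1,+\infty)$ with $\gamma>\beta$ gives $\int_{1}^{+\infty}s^{p\beta-1}|l(s)|^{p}ds\leq K^{p}\int_{1}^{+\infty}s^{p\beta-1-p\gamma}ds<+\infty$, so the Gronwall exponent stays finite and $|x|$ is bounded on $[1,+\infty)$ by some $R_{2}$. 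The same bound with $l_1$ replacing $l$ plays no additional role here since only $|f|$ was used.

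With boundedness in hand, the sandwich on $f$ integrates to
\begin{align*}
&x_{0}t^{\beta-1}+\frac{1}{\Gamma(\beta)}\int_{0}^{t}(t-s)^{\beta-1}\bigl[l_{1}(s)x^{\mu}(s)+k_{1}(s)\bigr]ds\\
&\qquad\leq x(t)\leq x_{0}t^{\beta-1}+\frac{1}{\Gamma(\beta)}\int_{0}^{t}(t-s)^{\beta-1}\bigl[l(s)x^{\mu}(s)+k(s)\bigr]ds
\end{align*}
for every $t>0$. I then pass to the limit on each side: $x_{0}t^{\beta-1}\to 0$ since $\beta<1$; since $x$ is bounded on $[1,+\infty)$ and $t^{\beta}|l(t)|\leq Kt^{\beta-\gamma}\to 0$ (similarly for $l_1$), we have $t^{\beta}l(t)x^{\mu}(t)\to 0$ and $t^{\beta}l_1(t)x^{\mu}(t)\to 0$, so Lemma 2.7 gives both $l$-integrals tending to zero; and Lemma 2.8 applied to $k$ and to $k_{1}$ (both with the same limit $b$) gives the remaining integrals tending to $b\pi/\sin(\beta\pi)$. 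The squeeze theorem then yields $\lim_{t\to+\infty}x(t)=b\pi/[\Gamma(\beta)\sin(\beta\pi)]$.

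The main obstacle is the boundedness step: one must carefully check that the range $\beta>1/p>2\beta-1$ is compatible with $p>1/\beta$, verify the $L^{1}_{Loc}[0,+\infty)$ requirement of Lemmas 2.7 and 2.8 for the integrands $l(s)x^{\mu}(s)$ near $s=0$ (using $x\in C_{1-\beta}(0,+\infty)$ and Hölder's inequality against the weight $s^{(1-\mu)(1-\beta)}$ in the $L^{p}$ hypothesis on $l$), and track all constants through Gronwall. The rest, particularly the final passage to the limit, is a direct application of the lemmas developed in Section 2.
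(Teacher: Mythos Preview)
Your proposal is correct and follows essentially the same route as the paper: derive the absolute bound $|f(t,x)|\leq(|l|+|l_1|)|x|^{\mu}+|k|+|k_1|$ to invoke Theorem 2.13, reuse the boundedness argument of Theorem 3.6 (Lemma 2.3 estimate, split at $t=1$, Gronwall with $\int_1^{\infty}s^{p\beta-1}|l|^p\,ds<\infty$ from $t^{\gamma}|l|\leq K$), and then squeeze using Lemmas 2.7 and 2.8 on the integrated upper and lower bounds. The paper's proof is identical in structure, merely citing ``the same procedure as in the proof of Theorem 3.6'' for the boundedness step and writing out the sandwich (3.28) as you do.
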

\begin{proof}From (3.26), we get
$$|f(t, x)|\leq(|l(t)|+|l_{1}(t)|)|x|^{\mu}+|k(t)|+|k_{1}(t)|$$
for all $(t,x)\in (0,+\infty)\times\mathbb{R}$. Using Theorem 2.13, we know that the fractional differential equation (1.1) has at
least one solution $x\in C_{1-\beta}(0,+\infty)$ that satisfies the following fractional integral equation
\begin{equation}
x(t)=x_{0}t^{\beta-1}+\frac{1}{\Gamma(\beta)}\int_{0}^{t}(t-s)^{\beta-1}f(s,x(s))ds   \qquad t\in(0,+\infty).
\end{equation}
Using the same procedure as in the proof of the Theorem 3.6, we can get that the solution $x$ of the equation (3.27) is a bounded function on $[1,+\infty)$, and $\lim_{t\rightarrow+\infty}t^{\beta}l(t)x^{\mu}(t)ds=\lim_{t\rightarrow+\infty}t^{\beta}l_{1}(t)x^{\mu}(t)ds=0$. Then we have
\begin{equation}
\begin{split}
\frac{b\pi}{\Gamma(\beta)\sin(\beta\pi)}&=\lim_{t\rightarrow+\infty}\left[x_{0}t^{\beta-1}+\frac{1}{\Gamma(\beta)}\int_{0}^{t}(t-s)^{\beta-1}[l_{1}(s)x^{\mu}(s)+k_{1}(s)]ds\right]\\
&\leq\lim_{t\rightarrow+\infty}x(t)\\
&\leq\lim_{t\rightarrow+\infty}\left[x_{0}t^{\beta-1}+\frac{1}{\Gamma(\beta)}\int_{0}^{t}(t-s)^{\beta-1}[l(s)x^{\mu}(s)+k(s)]ds\right]\\
&=\frac{1}{\Gamma(\beta)}\lim_{t\rightarrow+\infty}\int_{0}^{t}(t-s)^{\beta-1}l(s)x^{\mu}(s)ds+\frac{b\pi}{\Gamma(\beta)\sin(\beta\pi)}\\
&=\frac{b\pi}{\Gamma(\beta)\sin(\beta\pi)}.\\
\end{split}
\end{equation}
Therefore, the fractional differential equation (1.1) has at least one solution $x(t)\in
C_{1-\beta}(0,+\infty)$ and $\lim_{t\rightarrow+\infty}x(t)=\frac{b\pi}{\Gamma(\beta)\sin(\beta\pi)}$.
\end{proof}

\begin{thm}\label{thm:3.1}
Let $0<\beta<\gamma<1$ and $0\leq\mu\leq1$. Let $l(t)$ be a nonnegative function with $t^{(1-\mu)(1-\beta)}l(t)\in
C(0,+\infty)\bigcap L^{p}_{Loc}[0,+\infty)$, and $k(t)$ be a nonnegative function with $t^{1-\beta}k(t)\in
C(0,+\infty)\bigcap L^{p}_{Loc}[0,+\infty)$, where $p>1/\beta$. There exists a nonnegative constant $K$ such that
$t^{\gamma}l(t)\leq K$ and $t^{\gamma}k(t)\leq K$
for all $t\in [1,+\infty)$.
Suppose $f: (0,+\infty)\times\mathbb{R}\rightarrow\mathbb{R}$ is a continuous
function and
\begin{equation}
|f(t, x)|\leq l(t)|x|^{\mu}+k(t)
\end{equation}
for all $(t,x)\in (0,+\infty)\times\mathbb{R}$. Then the fractional differential equation
(1.1) has at least one solution $x(t)\in
C_{1-\beta}(0,+\infty)$ and $\lim_{t\rightarrow+\infty}x(t)=0$.
\end{thm}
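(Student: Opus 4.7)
The plan is to mirror the structure of the proof of Theorem 3.7 (and, for the boundedness step, Theorem 3.6), with the key simplification that both of the limits $\lim_{t\to+\infty}t^{\beta}l(t)$ and $\lim_{t\to+\infty}t^{\beta}k(t)$ equal $0$ here, since the hypotheses $t^{\gamma}l(t)\le K$ and $t^{\gamma}k(t)\le K$ on $[1,+\infty)$ with $\gamma>\beta$ force $t^{\beta}l(t)$ and $t^{\beta}k(t)$ to decay like $t^{\beta-\gamma}\to 0$. First I would invoke Theorem 2.13 on the growth condition $|f(t,x)|\le l(t)|x|^{\mu}+k(t)$ to produce at least one solution $x\in C_{1-\beta}(0,+\infty)$, which by Theorem 2.12 satisfies the Volterra representation
\begin{equation*}
x(t)=x_{0}t^{\beta-1}+\frac{1}{\Gamma(\beta)}\int_{0}^{t}(t-s)^{\beta-1}f(s,x(s))\,ds,\qquad t\in(0,+\infty).
\end{equation*}

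Next I would prove that $x$ is bounded on $[1,+\infty)$, running the argument already developed in Theorem 3.6. Choose $p>1/\beta$ with $\beta>1/p>2\beta-1$; applying $|f(t,x(t))|\le l(t)|x(t)|^{\mu}+k(t)$ and Lemma 2.3 to the integral equation and then splitting $\int_{0}^{t}=\int_{0}^{1}+\int_{1}^{t}$ gives an inequality of the form
\begin{equation*}
|x(t)|^{p}\le m_{1}+m_{2}\int_{1}^{t}s^{p\beta-1}l^{p}(s)|x(s)|^{p\mu}\,ds,\qquad t\in[1,+\infty),
\end{equation*}
where the constant $m_{1}$ absorbs $x_{0}$, the $L^{p}$ bound of $t^{(1-\mu)(1-\beta)}l(t)$ on $[0,1]$, the supremum $R_{1}=\sup_{0<t\le 1}t^{1-\beta}|x(t)|$, and crucially a uniform bound on $\frac{1}{\Gamma(\beta)}\int_{0}^{t}(t-s)^{\beta-1}k(s)\,ds$ for $t\ge 1$. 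This last uniform bound follows here from Lemma 2.7 applied to $k$, since $t^{\beta}k(t)\le K t^{\beta-\gamma}\to 0$. Because $t^{\gamma}l(t)\le K$ with $\gamma>\beta$, the integral $\int_{1}^{+\infty}s^{p\beta-1}l^{p}(s)\,ds$ is finite, so Gronwall's inequality applied to $w(t)=|x(t)|^{p}$ yields $\sup_{t\ge 1}|x(t)|=:R_{2}<+\infty$.

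Finally, with boundedness established, I would pass to the limit in the integral representation. Since $|t^{\beta}l(t)|x(t)|^{\mu}|\le R_{2}^{\mu}K t^{\beta-\gamma}\to 0$ and $t^{\beta}k(t)\le K t^{\beta-\gamma}\to 0$ as $t\to+\infty$, Lemma 2.7 applied to the functions $l(s)|x(s)|^{\mu}$ and $k(s)$ (both in $C(0,+\infty)\cap L^{1}_{Loc}[0,+\infty)$) gives
\begin{equation*}
\lim_{t\to+\infty}\int_{0}^{t}(t-s)^{\beta-1}l(s)|x(s)|^{\mu}ds=0,\qquad \lim_{t\to+\infty}\int_{0}^{t}(t-s)^{\beta-1}k(s)\,ds=0,
\end{equation*}
so bounding $|f(s,x(s))|\le l(s)|x(s)|^{\mu}+k(s)$ inside the integral representation together with $x_{0}t^{\beta-1}\to 0$ yields $\lim_{t\to+\infty}x(t)=0$.

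The main obstacle is the boundedness step on $[1,+\infty)$: the integral involving $l(s)|x(s)|^{\mu}$ is nonlinear in $x$, it has a weakly singular kernel, and $l$ is not monotone, so the argument genuinely needs the Lemma 2.3 singular estimate, the precise range $\beta>1/p>2\beta-1$, the splitting at $s=1$ to separate the local $L^{p}$ control near $0$ from the polynomial decay at infinity, and Gronwall's inequality. Once that is in hand, the limit computation is a direct application of Lemma 2.7.
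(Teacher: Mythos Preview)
Your proposal is correct and follows essentially the same route as the paper: the paper's proof simply says ``using the same proof as in Theorem 3.7'' to obtain existence and boundedness on $[1,+\infty)$ (which in turn invokes the Theorem 3.6 argument you spelled out), and then applies the bound $|f(t,x)|\le l(t)|x|^{\mu}+k(t)$ together with $t^{\gamma}l(t)\le K$, $t^{\gamma}k(t)\le K$ and Lemma 2.7 to conclude $\lim_{t\to+\infty}|x(t)|=0$. Your write-up is just a more explicit unpacking of the same steps.
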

\begin{proof}
Using the same proof as in the Theorem 3.7, we can get that the fractional differential equation
(1.1) has at least one solution $x(t)\in
C_{1-\beta}(0,+\infty)$ and $x(t)$ is a bounded function on $[1,+\infty)$. Since $t^{\gamma}l(t)\leq K$ and $t^{\gamma}k(t)\leq K$
for all $t\in [1,+\infty)$, then we get
\begin{equation}
\begin{split}
\lim_{t\rightarrow+\infty}|x(t)|&\leq\lim_{t\rightarrow+\infty}\frac{1}{\Gamma(\beta)}\int_{0}^{t}(t-s)^{\beta-1}[l(s)|x(s)|^{\mu}+k(s)]ds\\
&=0.\\
\end{split}
\end{equation}
Therefore, the proof is complete.
\end{proof}

\begin{rem}
In Theorem 3.1, we prove that the fractional differential equation
(1.4) has at least one decreasing solution when $0\leq\mu<1$. In hypothesis (2) of Theorem 3.1, we require that the function $t^{\beta}l(t)$ is a nonincreasing function on $(0,+\infty)$, then we have $t^{\beta}l(t)\geq l(1)$ when $t\in(0,1]$, and
\begin{equation}
\int_{0}^{1}l^{p}(s)ds\geq\int_{0}^{1}l^{p}(1)s^{-p\beta}ds.
\end{equation}
Since $p\beta>1$, then $\int_{0}^{1}l^{p}(s)ds$ is divergent. In hypothesis (2), we also require $t^{(1-\mu)(1-\beta)}l(t)\in L^{p}[0,1]$ when $p>1/\beta$. There is a contradiction with (3.31) when $\mu=1$. Therefore, we can not use the same method in Theorem 3.1 to study the monotonicity of fractional differential equation (1.4) when $\mu=1$.
\end{rem}

\begin{rem}
In Theorem 3.2, let
\begin{equation}
g(x)=x-\frac{\pi}{\Gamma(\beta)\sin(\beta\pi)}(a\phi(x)+b).
\end{equation}
We know \begin{equation}
g(0)=-\frac{\pi}{\Gamma(\beta)\sin(\beta\pi)}(a\phi(0)+b)\leq0
\end{equation}
and
\begin{equation}
g(x)\geq x-\frac{\pi}{\Gamma(\beta)\sin(\beta\pi)}(aMx^{\mu}+b).
\end{equation}
From (3.34), we get $\lim_{x\rightarrow+\infty}g(x)=+\infty$ for $0\leq\mu<1$. Therefore, there has at least one nonnegative solution $x$ such that $g(x)=0$, that is, $x$ is a nonnegative solution of the equation (3.9).
\end{rem}

\begin{rem}
By weakly singular integral inequalities, Zhu [24, Theorem 4.1 and Theorem 4.2] proved that the solution of fractional differential equation
(1.1) is attractive when $|f(t, x)|\leq l(t)|x|^{\mu}$. Using the Schauder fixed point theorem and generalized Ascoli-Arzela theorem, Zhu [25, Theorem 3.1] proved that the fractional differential equation (1.1) has at least one attractive solution when $|f(t, x)|\leq l(t)|x|^{\mu}+k(t)$. In this paper, we propose a new method to study the asymptotic behavior of the fractional differential equation (1.4). In Theorem 3.1 and Theorem 3.2, we get that the fractional differential equation (1.4) has at least one nonnegative decreasing solution and $\lim_{t\rightarrow+\infty}x(t)=k$. In particular, we get $\lim_{t\rightarrow+\infty}x(t)=0$ when $a=b=0$. In Theorem 3.6, we also study the asymptotic behavior of the fractional differential equation (1.4) when $l(t)$ and $k(t)$ are non-monotonic functions. In Theorem 3.7 and Theorem 3.8, we study the asymptotic behavior of the fractional differential equation (1.1). Especially, the constant $b$ could be a negative in Theorem 3.7.

Under the assumptions of theorem 3.8, Zhu [25, Theorem 3.1] obtained that the fractional differential equation (1.1) has at least one globally attractive solution (that is $\lim_{t\rightarrow+\infty}x(t)=0$). In the above Theorems, we study the monotonicity and asymptotic behavior of solutions for the fractional differential equations under some different assumptions. Therefore, our results in this paper generalize the results obtained in [24, 25].
\end{rem}

\section{Applications}
\begin{exa}
Consider the following Riemann-Liouville fractional differential equation
\begin{equation}
\begin{cases}
D_{0^{+}}^{1/2}x(t)=(t^{-3/4}+t^{-1/2})\frac{x(t)+1}{x(t)+2},\qquad t\in(0,+\infty),\\
\lim_{t\rightarrow 0^{+}}t^{1/2}x(t)=1.\\
\end{cases}
\end{equation}
\end{exa}
Since $\phi(t)=\frac{t+1}{t+2}\leq1$, let $\mu=0$ in hypothesis (2), then we get that $t^{1/2}l(t)=1+t^{-1/4}$ is a nonincreasing function on $(0,+\infty)$ and $t^{1/2}l(t)\in L^{p}[0,1]$, where $2<p<4$. From Theorem 3.2, we know that the equation (4.1)
has at least one decreasing solution $x\in C^{+}_{1/2}(0,+\infty)$ and $\lim_{t\rightarrow+\infty}x(t)=\frac{\sqrt{\pi}+\sqrt{4+\pi}-2}{2}$.

\begin{exa}
Consider the following Riemann-Liouville fractional differential equation
\begin{equation}
\begin{cases}
D_{0^{+}}^{1/2}x(t)=t^{-\gamma}\ln(1+x^{\mu}(t)),\qquad t\in(0,+\infty),\\
\lim_{t\rightarrow 0^{+}}t^{1/2}x(t)=1,\\
\end{cases}
\end{equation}
where $0<\mu<1$ and $1/2<\gamma<1-\mu/2$.
\end{exa}
From Theorem 3.4, we know that the equation (4.2)
has at least one decreasing solution $x\in C^{+}_{1/2}(0,+\infty)$ and $\lim_{t\rightarrow+\infty}x(t)=0$.

\begin{exa}
Consider the following Riemann-Liouville fractional differential equation
\begin{equation}
\begin{cases}
D_{0^{+}}^{1/2}x(t)=t^{-1/2}\sqrt[3]{x(t)},\qquad t\in(0,+\infty),\\
\lim_{t\rightarrow 0^{+}}t^{1/2}x(t)=1.\\
\end{cases}
\end{equation}
\end{exa}
We know that the equation
$$x=\sqrt{\pi}\sqrt[3]{x}$$ has two different solutions $x_{1}=0$ and $x_{2}=\pi^{3/4}$. Using Theorem 3.5, we know that the fractional differential equation (4.3) has at least one decreasing solution $x\in C^{+}_{1/2}(0,+\infty)$ and $\lim_{t\rightarrow+\infty}x(t)=\pi^{3/4}$.

\begin{exa}
Consider the following Riemann-Liouville fractional differential equation
\begin{equation}
\begin{cases}
D_{0^{+}}^{1/2}x(t)=\frac{\sqrt{x(t)}+\sqrt{t}}{1+t},\qquad t\in(0,+\infty),\\
\lim_{t\rightarrow 0^{+}}t^{1/2}x(t)=1.\\
\end{cases}
\end{equation}
\end{exa}
From Theorem 3.6, we know that the equation (4.4)
has at least one solution $x\in C^{+}_{1/2}(0,+\infty)$ and $\lim_{t\rightarrow+\infty}x(t)=\sqrt{\pi}$.

\begin{exa}
Consider the following Riemann-Liouville fractional differential equation
\begin{equation}
\begin{cases}
D_{0^{+}}^{1/2}x(t)=\frac{\sqrt{x(t)+t}}{1+t},\qquad t\in(0,+\infty),\\
\lim_{t\rightarrow 0^{+}}t^{1/2}x(t)=1.\\
\end{cases}
\end{equation}
\end{exa}
Since
$$\frac{\sqrt{t}}{1+t}\leq\frac{\sqrt{x+t}}{1+t}\leq\frac{\sqrt{x}+\sqrt{t}}{1+t},$$
for all $t, x\in[0,+\infty)$.
From Theorem 3.7, we know that the equation (4.5)
has at least one solution $x\in C^{+}_{1/2}(0,+\infty)$ and $\lim_{t\rightarrow+\infty}x(t)=\sqrt{\pi}$.

\begin{exa}
Consider the following Riemann-Liouville fractional differential equation
\begin{equation}
\begin{cases}
D_{0^{+}}^{1/2}x(t)=t^{-2/3}\sqrt[3]{x(t)}-t^{-1/2},\qquad t\in(0,+\infty),\\
\lim_{t\rightarrow 0^{+}}t^{1/2}x(t)=1.\\
\end{cases}
\end{equation}
\end{exa}
From Theorem 3.7, we know that the equation (4.6)
has at least one solution $x\in C_{1/2}(0,+\infty)$ and $\lim_{t\rightarrow+\infty}x(t)=-\sqrt{\pi}$.

\bigskip

\centerline{Acknowledgements}
I thank the referee for pointing out some useful references and for valuable comments which led to improvements of the paper.

\centerline{Declarations}
Conflict of interest: The authors declare that they have no conflict of interest.

\end{document}